\begin{document}
	\def\me{{\rm meas}}
	
	\newcommand{\co}{\overline{co}\hbox{ }}
	\newcommand\inv[1]{#1\raisebox{1.15ex}{$\scriptscriptstyle-\!1$}}
	\title{\textbf{A family of maximal algebras of Block Toeplitz Matrices}}
	\author{Muhammad Ahsan Khan}
	\address{Abdus Salam School of Mathematical Sciences, GC University, Lahore, Pakistan}
	\email{ahsan.khan@sms.edu.pk,smsahsankhan@gmail.com}

	\begin{abstract}
		The maximal commutative subalgebras containing only Toe\-plitz matrices have been identified as generalized circulants. A similar simple description cannot be obtained for block Toeplitz matrices. We introduce and investigate certain families of 
	 maximal commutative algebras of block Toeplitz matrices.
	\end{abstract}
	
	\maketitle
	
	\newcommand{\Ker}{\mathop{\rm Ker}}
	\newcommand{\degree}{\mathop{\rm deg}}
	
	\numberwithin{equation}{section}
	\newtheorem{theorem}{Theorem}[section]
	\newtheorem{proposition}[theorem]{Proposition}

	\newtheorem{corollary}[theorem]{Corollary}
	\newtheorem{lemma}[theorem]{Lemma}
	
	\theoremstyle{definition}
		\newtheorem{remark}[theorem]{Remark}
	\newtheorem{definition}[theorem]{Definition}
	\newtheorem{example}[theorem]{Example}
	\newtheorem{assumption}[theorem]{A}
\section{Introduction}
\advance\baselineskip by 3pt
Toeplitz matrices are one of the most well-studied and understood classes of structured
matrices, that arise naturally in several fields of
mathematics, as well as applied areas as  signal processing or time series analysis. The subject is many decades old; among the monographs dedicated to the subject  are~\cite{ioh}, \cite{grenand-szego},
and  \cite{widom}.

The related area of block Toeplitz matrices is less studied, one of the reasons being the new difficulties that appear with respect to the scalar case. Besides its theoretical interest, the subject is also important in view of the applications to multivariate control theory. As references for block Toeplitz matrices one can use~\cite{botsil} and~\cite{GGK}.

Multiplication properties of Toeplitz matrices have been discussed in~\cite{shalom, gu-patton}. Since the product of two Toeplitz matrices is not necessarily a Toeplitz matrix, it is interesting to investigate subsets of Toeplitz matrices that are closed under multiplication---that is, algebras of Toeplitz matrices. It is not obvious that such nontrivial algebras exist (besides scalar multiples of the identity); but it turns out that one can identify all such maximal algebras~\cite{shalom}; they are given by the so-called generalized circulants~\cite{davis}. 

The investigation of the similar problem for block Toeplitz matrices has been proposed as an open question in~\cite{shalom}, and it does not appear that much progress has been done since. The general problem of characterizing all maximal algebras of block Toeplitz matrices seems very hard. In the current paper we make a few steps in this direction, describing a certain family of such algebras.

The plan of the paper is the following. We start with a section of preliminaries. Then, as we will be interested in block Toeplitz matrices whose entries are themselves elements of maximal commutative subalgebras of matrices, we discuss in Section~3 several such examples.
In Section~4 we introduce  a certain family of maximal algebras that we will consider and prove some basic results. Finally, in the last three sections we discuss this family corresponding to the different maximal commutative subalgebras considered in Section~3. 

\section{preliminaries}\label{se:prelim}

As usual, $\mathbb{C}$ will stand for the complex plane.   We designate the algebra of all $d\times d $ matrices with entries from $\mathbb{C}$ by $ \mathcal{M}_{d\times d}(\mathbb{C})$. An
algebra is a vector space $\mathcal{A}$ over $ \mathbb{C} $ which is closed to multiplication. A subset $\mathcal{B}$ of an algebra
is called a subalgebra if it is itself an algebra; it is commutative if $ab = ba$ for every $a,b\in \mathcal{B}$. A
subalgebra $\mathcal{B}$ of $ \mathcal{A} $ is maximal commutative if for any commutative subalgebra $ \mathcal{B}_1 $ the inclusion  $ \mathcal{B}\subset\mathcal{B}_1 $
 implies that $ \mathcal{B}=\mathcal{B}_1$.

If $\mathcal{B}$ is a subalgebra of $\mathcal{M}_{d\times d}(\mathbb{C})$, then 
$\mathcal{B}^{'}$ denotes the commutant of $ \mathcal{B} $ (the set of all $ d\times d $ matrices commuting with eevry element of $\mathcal{B}$).
 It is not difficult to see that $\mathcal{B}^{'}$ is also an algebra.

For diagonal matrices we will use the notation
\[
diag
\begin{pmatrix}
a_{1}&a_{2}&\cdots& a_{d}
\end{pmatrix}=
\begin{pmatrix} 
a_{1} &  0     &\ldots  & 0\\
0     &  a_{2} &\ldots  & 0\\
\vdots& \vdots & \ddots & \vdots\\
0&  0  & \ldots & a_d
\end{pmatrix}
\]

Classical \emph{Toeplitz matrices} are square matrices whose entries are constant parallel to the main diagonal; that is, they are of the form
\[
T=(t_{p-q})_{p,q=0}^{n-1} : t_j\in\mathbb{C}.
\]
The linear space of all $ n\times n $ Toeplitz matrices will be denoted by $ \mathcal{T}_n $.

If we replace the scalar entries of Toeplitz matrices by $ d\times d $ matrices, we obtain  \emph{block Toeplitz matrices}. Thus a block Toeplitz matrix is actually an $ nd\times nd $ matrix, but which has been decomposed in $ n^2 $ blocks of dimension $ d $, and  these blocks are constant parallel to the main diagonal. We will use the notation $\mathbf{T}=(T_{p-q})_{p,q=0}^{n-1}$,  for a block Toeplitz matrix, and we will 
 denote  by $\mathcal{T}_{n,d}$ the linear space of all $n\times n$ block Toeplitz matrices whose entries are $d\times d$ matrices; thus 
\[
\mathcal{T}_{n,d}=
\left\lbrace 
(T_{p-q})_{p,q=0}^{n-1}
: T_{j}\in \mathcal{M}_{d\times d}(\mathbb{C})
\right\rbrace.
\]
The scalar case is $\mathcal{T}_n=\mathcal{T}_{n,1}$.

It is well known that the product of two scalar Toeplitz matrices is not necessarily a Toeplitz matrix. 
The main multiplicative properties of $ \mathcal{T}_n $ can be found in~\cite[Theorem 1.3]{shalom}. The next two theorems summarize then in a convenient form.

\begin{theorem}\label{th:shalom1}
	The following statements are equivalent:
	\begin{enumerate}
		\item $ \mathcal{A} $ is a maximal subalgebra of $ \mathcal{T}_n $.
		
		\item $ \mathcal{A} $ is a maximal commutative subalgebra of $ \mathcal{T}_n $.
		
		\item  There exist $ a,b\in\mathbb{C} $, not both zero, such that 
		\begin{equation}\label{generalized circulant}
		\mathcal{A}= \Big\{
		T=
		\begin{pmatrix}
		t_{0}& bt_{1}&\ldots &bt_{n-1}\\
		at_{n-1}& t_{0}& \ldots &bt_{n-2}\\
		\vdots& \vdots&\ddots &\vdots\\
		at_{1}& at_{2}&\ldots &t_{0}
		\end{pmatrix}
		: t_0,\dots, t_{n-1}\in \mathbb{C}
		\Big\}.
		\end{equation}
	\end{enumerate} 
\end{theorem}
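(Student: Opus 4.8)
The plan is to construct the algebras in condition (3) explicitly, to characterize them by a commutation condition, and then to recover the two constants from an arbitrary subalgebra, thereby closing the cycle $(3)\Rightarrow(1)$, $(3)\Rightarrow(2)$, $(1)\Rightarrow(3)$ and $(2)\Rightarrow(3)$. First I would verify that the set in (3), which I denote $\mathcal{A}_{a,b}$, is a commutative subalgebra of $\mathcal{T}_n$. Assuming $b\neq 0$ (the degenerate cases $b=0$ and $a=0$ give the lower- and upper-triangular Toeplitz matrices, which are polynomials in a nilpotent shift and are treated the same way), I would introduce the generator $W$, namely the matrix in (3) with $t_1=1$ and all other $t_j=0$. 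A direct computation gives $W^{n}=ab^{\,n-1}I$, and after the rescaling $t_k\mapsto t_k/b^{\,k-1}$ one sees that $\{I,W,\dots,W^{n-1}\}$ is a basis of $\mathcal{A}_{a,b}$. Hence $\mathcal{A}_{a,b}=\{p(W):\deg p<n\}$ is the algebra of polynomials in a single matrix: it is automatically commutative and closed under multiplication, and it lies in $\mathcal{T}_n$ because every power $W^{k}$ is again of the form in (3).

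Next, to obtain maximality I would compute the commutant of $W$ inside $\mathcal{T}_n$. Writing a general Toeplitz matrix $T=(t_{p-q})_{p,q=0}^{n-1}$ and imposing $WT=TW$, then reading off the resulting scalar equations diagonal by diagonal, forces the superdiagonal and subdiagonal entries of $T$ to be linked by precisely the ratio and wrap-around defining $\mathcal{A}_{a,b}$; thus the only Toeplitz matrices commuting with $W$ are the elements of $\mathcal{A}_{a,b}$ itself. Since every commutative subalgebra $\mathcal{B}\supseteq\mathcal{A}_{a,b}$ must commute with $W$, this yields $\mathcal{B}=\mathcal{A}_{a,b}$, proving $(3)\Rightarrow(2)$; the same computation shows that no subalgebra of $\mathcal{T}_n$ (commutative or not) can properly contain $\mathcal{A}_{a,b}$, which gives $(3)\Rightarrow(1)$. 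I would also record that any maximal commutative subalgebra contains $I$, since adjoining $I$ preserves commutativity, closure, and Toeplitzness.

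The converses $(1)\Rightarrow(3)$ and $(2)\Rightarrow(3)$ rest on showing that every subalgebra $\mathcal{A}\subseteq\mathcal{T}_n$ of dimension at least two is contained in a single $\mathcal{A}_{a,b}$. For a non-scalar $T\in\mathcal{A}$ the product $T^{2}$ again lies in $\mathcal{T}_n$, and a short calculation reduces its Toeplitzness to the boundary identity
\[
t_{p-n+1}\,t_{n-1-q}=t_{p+1}\,t_{-q-1},\qquad 0\le p,q\le n-2,
\]
equivalently $u_i\ell_j=\ell_{n-i}u_{n-j}$ for all $i,j$, where $u_i=t_{-i}$ and $\ell_i=t_i$ denote the super- and subdiagonal entries. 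I would analyze these relations, together with the analogous identities coming from products $TS$ of distinct elements, to show that the upper and lower data of every element of $\mathcal{A}$ are governed by one common pair $(a,b)$, so that $\mathcal{A}\subseteq\mathcal{A}_{a,b}$; maximality together with the previous step then forces equality.

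The main obstacle is this last direction. The identity $u_i\ell_j=\ell_{n-i}u_{n-j}$ is necessary but not sufficient, so the delicate points are to extract from the closure of the \emph{whole} algebra a single pair $(a,b)$ that works simultaneously for all elements and all products, and to handle the cases in which some $u_i$ or $\ell_i$ vanish---exactly where the triangular limits $a=0$ or $b=0$ appear. Once the constants are pinned down, the remainder is bookkeeping on the Toeplitz diagonals.
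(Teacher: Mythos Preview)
The paper does not contain a proof of this theorem: it is quoted (together with Theorem~\ref{th:shalom2}) from Shalom's paper as preliminary background, introduced by the sentence ``the main multiplicative properties of $\mathcal{T}_n$ can be found in~\cite[Theorem 1.3]{shalom}.'' There is therefore nothing in the present paper to compare your attempt against.

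That said, your outline is along the right lines. One point to tighten: in $(3)\Rightarrow(1)$ you invoke ``the same computation'' as for the commutant, but for a not-necessarily-commutative algebra $\mathcal{B}\supseteq\mathcal{A}_{a,b}$ you do not know that an element $T\in\mathcal{B}$ commutes with $W$. What you do know is that $WT\in\mathcal{B}\subseteq\mathcal{T}_n$ is \emph{Toeplitz}, and the scalar case of the product criterion (Lemma~\ref{le:condition for product to be Toeplitz} here) applied to $WT$ already yields $a\,t_{q-n}=b\,t_q$ for $q=1,\dots,n-1$, hence $T\in\mathcal{A}_{a,b}$. Make that distinction explicit rather than folding it into the commutant calculation. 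As you yourself acknowledge, the substantive work lies in $(1)\Rightarrow(3)$ and $(2)\Rightarrow(3)$: extracting a \emph{single} pair $(a,b)$ valid simultaneously for every element of the algebra from the product identities, and handling the degenerate cases where entries vanish, is exactly where your sketch stops short of a proof.
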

It is immediate that the algebra $ \mathcal{A} $ defined by~\eqref{generalized circulant} depends only on the quotient $ a/b $. Shalom denotes, for any complex number $\alpha$,  by $\Pi_{\alpha}$ the set of all Toeplitz matrices of the form  \eqref{generalized circulant}, where $a=1$, $b=\alpha$ and by $\Pi_{\infty}$ the set of all Toeplitz matrices of the form \eqref{generalized circulant}, where $a=0$ and $b=1$ (i.e. the set of all upper triangular Toeplitz matrices). These algebras are called \emph{generalized circulants}.

\begin{theorem}\label{th:shalom2}
	If $ T\in \mathcal{T}_n $ is invertible, then there exists $ \alpha\in\mathbb{C}\cup\{\infty\} $ such that $ T $ as well as $ T^{-1} $ belong to $ \Pi_{\alpha} $.
\end{theorem}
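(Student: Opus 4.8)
The plan is to separate the statement into two steps: first, that an invertible $T\in\mathcal{T}_n$ already lies in some generalized circulant $\Pi_\alpha$; and second, that once this is known, $T^{-1}$ automatically lies in the same $\Pi_\alpha$. The second step is the routine one. Fix $\alpha$ with $T\in\Pi_\alpha$. Since $\Pi_\alpha$ is a finite-dimensional commutative algebra containing the identity (it is one of the maximal commutative subalgebras of Theorem~\ref{th:shalom1}, and $I$ satisfies the defining relations), the Cayley--Hamilton theorem expresses $T^{-1}$ as a polynomial in $T$ with scalar coefficients; as $\Pi_\alpha$ is closed under multiplication and contains $I$ and $T$, every such polynomial lies in $\Pi_\alpha$, whence $T^{-1}\in\Pi_\alpha$. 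So the whole weight of the theorem rests on the first step.

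For the first step I would try to produce a commutative subalgebra of $\mathcal{T}_n$ that contains $T$, and then invoke Theorem~\ref{th:shalom1}: any commutative subalgebra is contained in a maximal one, and the maximal commutative subalgebras of $\mathcal{T}_n$ are exactly the $\Pi_\alpha$. The immediate difficulty is that $\mathcal{T}_n$ is not closed under multiplication, so the powers $T,T^2,\dots$ need not be Toeplitz and $T$ does not, on its own, generate a subalgebra inside $\mathcal{T}_n$. The natural alternative is to read off the candidate parameter $\alpha$ directly from the diagonals of $T$: writing $T=(s_{p-q})_{p,q=0}^{n-1}$, membership in $\Pi_\alpha$ (with $a=1$, $b=\alpha$) is exactly the system of relations $s_{-j}=\alpha\,s_{n-j}$ for $j=1,\dots,n-1$, so I would attempt to show that invertibility of $T$ forces a single $\alpha$ to satisfy all of these simultaneously.

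The hard part, and in fact the crux of the matter, is precisely this last point: deducing the generalized-circulant relations $s_{-j}=\alpha\,s_{n-j}$ from invertibility alone. A dimension count is already a warning sign---each $\Pi_\alpha$ is only $n$-dimensional inside the $(2n-1)$-dimensional space $\mathcal{T}_n$, and the union over $\alpha\in\mathbb{C}\cup\{\infty\}$ sweeps out a set of dimension roughly $n+1$, which is proper once $n\ge 3$---so I would expect invertibility, a generic (open, dense) condition, to be too weak by itself to pin $T$ into $\bigcup_\alpha\Pi_\alpha$. I would therefore first test the claim on small cases: $n\le 2$ should go through directly, but for $n=3$ I anticipate explicit invertible Toeplitz matrices whose two relations $s_{-1}=\alpha s_2$ and $s_{-2}=\alpha s_1$ demand incompatible values of $\alpha$. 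Clarifying exactly what extra input makes the relations $s_{-j}=\alpha\,s_{n-j}$ forced---in particular whether one must also know that $T^{-1}$ is itself Toeplitz---is the main obstacle I would have to confront before the first step can be carried out.
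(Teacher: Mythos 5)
There is nothing in the paper to compare your argument against: the paper does not prove this theorem, but quotes it (together with Theorem~\ref{th:shalom1}) as a summary of \cite[Theorem~1.3]{shalom}. Your second step is correct and complete as far as it goes: since $\Pi_{\alpha}$ is a commutative algebra containing $I$, Cayley--Hamilton writes $T^{-1}$ as a polynomial in $T$, so $T^{-1}\in\Pi_{\alpha}$ whenever $T\in\Pi_{\alpha}$ is invertible; alternatively, this follows from Lemma~\ref{le:inverse closed}, since Section~3 of the paper records that each $\Pi_{\alpha}$ is maximal commutative in the full matrix algebra.

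The gap you identify in the first step is real, and your suspicion is exactly right: the statement as printed is false for $n\ge 3$, so no proof of it can exist, and your dimension count ($n+1$ versus $2n-1$) correctly predicts this. For a concrete witness of the kind you anticipated, take
\[
T=\begin{pmatrix}5&1&0\\1&5&1\\1&1&5\end{pmatrix},
\]
which is Toeplitz with $\det T=116\neq 0$. Writing $T=(s_{p-q})$, membership in $\Pi_{\alpha}$ for finite $\alpha$ requires $s_{-1}=\alpha s_{2}$ and $s_{-2}=\alpha s_{1}$, i.e.\ $\alpha=1$ and $\alpha=0$ simultaneously (the reciprocal convention of \eqref{generalized circulant} fails just as badly, demanding $1=\alpha\cdot 0$), while $\Pi_{\infty}$ requires $s_{1}=s_{2}=0$; so this invertible $T$ lies in no $\Pi_{\alpha}$. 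The resolution is the extra hypothesis you conjectured: Shalom's actual theorem assumes that $T^{-1}$ is itself a Toeplitz matrix, and concludes that an invertible Toeplitz matrix has a Toeplitz inverse if and only if it is a generalized circulant, in which case $T$ and $T^{-1}$ lie in a common $\Pi_{\alpha}$. Under that hypothesis your first step becomes provable along the lines of the scalar case of Lemma~\ref{le:condition for product to be Toeplitz}: since $TT^{-1}=T^{-1}T=I$ is Toeplitz, the product criterion applied to the pairs $(T,T^{-1})$ and $(T^{-1},T)$ yields cross-relations between the diagonals of $T$ and those of $T^{-1}$ that force a single common $\alpha$. In short, the missing ingredient is not a step you failed to find but a hypothesis dropped in the paper's transcription of \cite[Theorem~1.3]{shalom}; your proposal correctly settles the easy half and correctly refutes the statement as written.
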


As noted in~\cite{shalom}, there is no satisfactory analog of the above results for block Toeplitz algebras, and a classification of maximal commutative subalgebras seems beyond reach. Our purpose in this paper is to explore a  class of maximal commutative subalgebras of the subspace of block Toeplitz matrices.

%

\section{Maximal commutative subalgebras of matrices}\label{se:block toeplitz}

We have already noted that the class of block Toeplitz matrices is not closed with respect to multiplication. In fact, the next lemma can be proved after some slightly tedious computations.

\begin{lemma}\label{le:condition for product to be Toeplitz}
	Suppose $\mathbf{T}=(T_{p-q})_{p,q=0}^{n-1}$ and $\mathbf{U}=(U_{p-q})_{p,q=0}^{n-1}$ are two block Toeplitz matrices with entries in $ \mathcal{M}_{d\times d}(\mathbb{C}) $. The product  $\mathbf{TU}$ is a block Toeplitz matrix if and only if
	\begin{equation}\label{eq:basic product condition}
	T_{p}U_{q-n}=T_{p-n}U_{q}\quad\text{ for all } p,q=1,2,\cdots n-1.
	\end{equation}
\end{lemma}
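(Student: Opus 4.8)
The plan is to compute the block entries of the product directly and then impose the defining property of a block Toeplitz matrix, namely invariance of the blocks under a simultaneous shift of the row and column index. Writing $\mathbf{T}=(T_{p-q})_{p,q=0}^{n-1}$ and $\mathbf{U}=(U_{p-q})_{p,q=0}^{n-1}$, the $(i,j)$ block of the product is
\[
(\mathbf{TU})_{ij}=\sum_{k=0}^{n-1}T_{i-k}U_{k-j},
\]
where the order of the two factors must be preserved since the entries are matrices rather than scalars. A square block matrix is block Toeplitz precisely when $(\mathbf{TU})_{i+1,j+1}=(\mathbf{TU})_{ij}$ for all indices with $0\le i,j\le n-2$ (iterating this equality generates every instance of $i-j=i'-j'$), so the whole statement reduces to analysing this single family of equalities.

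First I would form the difference of two neighbouring diagonal blocks. Shifting the summation index in the expression for $(\mathbf{TU})_{i+1,j+1}$ (replacing $k$ by $k+1$) rewrites it as a sum over $k=-1,\dots,n-2$ of the very summands $T_{i-k}U_{k-j}$ that appear in $(\mathbf{TU})_{ij}$, whose sum ranges over $k=0,\dots,n-1$. Subtracting, the overlapping terms for $k=0,\dots,n-2$ cancel in a telescoping-type collapse, and only the two boundary terms survive:
\[
(\mathbf{TU})_{i+1,j+1}-(\mathbf{TU})_{ij}=T_{i+1}U_{-1-j}-T_{i-n+1}U_{n-1-j}.
\]
Thus the product is block Toeplitz if and only if the right-hand side vanishes for every $i,j\in\{0,\dots,n-2\}$, and this single computation handles both directions of the equivalence at once.

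Finally I would translate this into the stated form by relabelling. Setting $p=i+1$ and $q=j+1$, both ranging over $\{1,\dots,n-1\}$, the vanishing condition reads $T_pU_{-q}=T_{p-n}U_{n-q}$; substituting $q\mapsto n-q$, which merely permutes $\{1,\dots,n-1\}$, turns this into $T_pU_{q-n}=T_{p-n}U_q$, which is exactly~\eqref{eq:basic product condition}. I expect the only real obstacle to be the index bookkeeping: one must check that the shift of the summation variable deposits the boundary terms in the correct slots, and that the final substitution $q\mapsto n-q$ is a genuine bijection of the index set, so that no instance of the condition is gained or lost.
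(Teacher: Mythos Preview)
Your argument is correct and matches the paper's approach. The paper does not give a detailed proof of this lemma (it merely says it ``can be proved after some slightly tedious computations''), but exactly the same telescoping identity
\[
(\mathbf{T}\mathbf{U})_{i,j}-(\mathbf{T}\mathbf{U})_{i+1,j+1}=T_{i-n+1}U_{n-1-j}-T_{i+1}U_{-1-j}
\]
is derived and used in the proof of Theorem~\ref{t1}, so your write-up is precisely the computation the paper has in mind.
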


The main purpose of the present article is to study certain subalgebras of $\mathcal{T}_{n,d}$. Their form is suggested by the generalized circulants that appear in Theorem~\ref{th:shalom1}; we will see however, first that some care must taken with their definition, and secondly that they do not exhaust all subalgebras of $ \mathcal{T}_{n,d} $.

As a preparation, let us remember that the entries of scalar Toeplitz matrices are complex numbers. 
Therefore, as a first step in the study of subalgebras of block Toeplitz matrices, we will assume that their entries belong to a fixed  maximal commutative subalgebra  of $\mathcal{M}_{d\times d}(\mathbb{C})$, that we will denote by $\mathcal{B}$. There are many examples of maximal commutative subalgebras of $\mathcal{M}_{d\times d}(\mathbb{C})$, and their classification is far from being achieved (see, for instance, \cite{brown1, brown2, brown3}). Here is a non-exhaustive list to which we will refer in the sequel.
\begin{enumerate}
	\item If we fix a basis in $ \mathbb{C}^d $, then the algebra of diagonal matrices $\mathcal{D}$ is maximal commutative.

\item 	The generalized circulant algebras $\Pi_{\alpha}$ defined in section 2 are maximal commutative subalgebras. In particular, $ \Pi_0 $ is the algebra of lower triangular scalar Toeplitz matrices, while  $\Pi_{\infty}$ is the algebra of upper triangular scalar Toeplitz matrices.
	
	\item  Fix positive integers $ \sigma, \tau $, such that $\sigma+\tau=d$.  Consider the family of  matrices that have with respect to the decomposition $\mathbb{C}^{d}=\mathbb{C}^{\sigma}\oplus\mathbb{C}^{\tau}$ the form
	\[
	\mathcal{O}_{\sigma, \tau}=
	\left\lbrace
	\begin{pmatrix}
	\lambda I_{\sigma}& X\\
	0            &\lambda I_{\tau}
	\end{pmatrix}\Bigl|
	\lambda\in\mathbb{C},X\in\mathcal{M}_{\sigma\times\tau}(\mathbb{C})
	\right\rbrace\]
It is shown in~\cite{brown1} that, for $ |\sigma-\tau|\le 1 $, $ 	\mathcal{O}_{\sigma, \tau} $ 
	is  a maximal commutative algebra. We will call it  a \emph{Schur algebra}.

\item 	Suppose that $M$ is a nonderogatory matrix; that is, its minimal polynomial is equal to its characteristic polynomial. Then the algebra $\mathcal{P}(M)$ generated by $M$ is   maximal commutative.
	
\end{enumerate}

These examples show the large variety of maximal commutative subalgebras of matrices. We will use them as main test cases for our development.

Let us note, for further use, the following simple result. Recall that a subalgebra $\mathcal{B}$ is said to be inverse-closed if, whenever $B\in\mathcal{B}$ and $B$ is invertible, it follows that $B^{-1}$ also is in $\mathcal{B}$. 

\begin{lemma}\label{le:inverse closed}
	If $\mathcal{B}$ is a maximal commutative subalgebra of $\mathcal{M}_{d\times d}(\mathbb{C})$, then $\mathcal{B}$ is inverse closed.
\end{lemma}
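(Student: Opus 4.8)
The plan is to exploit maximality by showing that adjoining $B^{-1}$ to $\mathcal{B}$ produces no strictly larger commutative algebra. Fix $B\in\mathcal{B}$ invertible; the goal is to conclude $B^{-1}\in\mathcal{B}$.

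First I would show that $B^{-1}$ commutes with every element of $\mathcal{B}$. Since $\mathcal{B}$ is commutative and $B\in\mathcal{B}$, we have $BC=CB$ for each $C\in\mathcal{B}$. Multiplying this identity on both the left and the right by $B^{-1}$ yields $CB^{-1}=B^{-1}C$; thus $B^{-1}$ lies in the commutant of $\mathcal{B}$.

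Next, let $\mathcal{B}_1$ denote the subalgebra of $\mathcal{M}_{d\times d}(\mathbb{C})$ generated by $\mathcal{B}$ together with $B^{-1}$. I claim $\mathcal{B}_1$ is commutative. Indeed, every generator of $\mathcal{B}_1$ (the elements of $\mathcal{B}$ and the single element $B^{-1}$) commutes with every other: elements of $\mathcal{B}$ commute among themselves by hypothesis, while $B^{-1}$ commutes with all of them by the previous step and with itself trivially. A set of pairwise commuting elements generates a commutative algebra, since every element of $\mathcal{B}_1$ is a linear combination of products of generators and any two such products can be rearranged into a common order. As $\mathcal{B}\subseteq\mathcal{B}_1$ and $\mathcal{B}_1$ is commutative, maximality of $\mathcal{B}$ forces $\mathcal{B}=\mathcal{B}_1$, and in particular $B^{-1}\in\mathcal{B}$.

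The one point requiring care --- and the main, if mild, obstacle --- is the verification that $\mathcal{B}_1$ is genuinely commutative from the pairwise commuting of its generators; this is where a short induction on word length is needed rather than a one-line assertion. As an alternative that makes everything explicit, I could instead invoke the Cayley--Hamilton theorem: since $B$ is invertible its characteristic polynomial has nonzero constant term, so $B^{-1}$ is a polynomial in $B$. After observing that $I\in\mathcal{B}$ (otherwise $\mathcal{B}+\mathbb{C}I$ would be a strictly larger commutative subalgebra, contradicting maximality), this polynomial in $B$ lies in $\mathcal{B}$ directly, yielding $B^{-1}\in\mathcal{B}$ without recourse to the generated algebra.
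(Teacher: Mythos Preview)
Your proof is correct and follows essentially the same approach as the paper: show that $B^{-1}$ commutes with every element of $\mathcal{B}$, then invoke maximality. The paper packages this a bit more concisely by observing that a commutative subalgebra $\mathcal{B}$ is maximal commutative if and only if $\mathcal{B}=\mathcal{B}'$, so once $B^{-1}\in\mathcal{B}'$ the conclusion is immediate---your generated-algebra step is just an unpacking of this equivalence.
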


\begin{proof}
	If  $\mathcal{B}$ is a commutative subalgebra, then $\mathcal{B}\subset\mathcal{B}'  $, and
	$\mathcal{B}$ is a maximal commutative subalgebra if and only if $\mathcal{B}=\mathcal{B}'$.   Suppose then $A\in\mathcal{B}$ is invertible. Since $AB=BA$ implies $A^{-1}B=BA^{-1}$, it follows that $A^{-1}\in \mathcal{B}'=\mathcal{B}$.
\end{proof}

\section{A class of subalgebras}

Fixing a maximal commutative subalgebra $\mathcal{B}$ in $\mathcal{M}_{d\times d}(\mathbb{C})$, we will denote by $ \mathcal{T}_{n,d}(\mathcal{B}) $ the linear space of block Toeplitz matrices with entries in $\mathcal{B}$. We are interested in identifying maximal subalgebras of $ \mathcal{T}_{n,d} $ that are contained in $ \mathcal{T}_{n,d}(\mathcal{B}) $. 

The class we will consider is inspired by the generalized circulants $ \Pi_{\alpha} $ in Section~\ref{se:prelim}. We start by noting that we can also define 
\[
\Pi_{\alpha}= \{ T=(t_{p-q})_{p,q=0}^{n-1} : at_j=bt_{j-n} \text{ for } j=1, \dots, n-1  \}.
\]

Let us then fix $ A,B\in \mathcal{B}' $, and define the family $\mathcal{F}_{A,B}^{\mathcal{B}}$ by
\begin{equation}\label{eq:definition of FAB}
\mathcal{F}_{A,B}^{\mathcal{B}}=
\left\lbrace 
(T_{p-q})_{p,q=0}^{n-1}
:
T_{j}\in\mathcal{B}, AT_{j}=BT_{j-n}, j=1,2,\cdots n-1
\right\rbrace .
\end{equation}

We will use the following simple lemma.

\begin{lemma}\label{le:kernel A and B}
Suppose $A,B\in\mathcal{M}_{d\times d}(\mathbb{C})$  satisfy the condition
\begin{equation}\label{eq1}
	KerA\cap KerB=\{0\}.
\end{equation}  
 If $T$ is any $d\times d$ matrix such that $AT=BT=0$ then $T=0$.
\end{lemma}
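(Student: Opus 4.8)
The plan is to control the \emph{image} (column space) of $T$ rather than its kernel. First I would observe that the single matrix equation $AT=0$ already forces the image of $T$ to lie inside $\Ker A$: for every vector $x\in\mathbb{C}^d$ we have $A(Tx)=(AT)x=0$, so $Tx\in\Ker A$. Running the identical argument with the hypothesis $BT=0$ shows that $Tx\in\Ker B$ for every $x$ as well. Combining the two inclusions gives $Tx\in\Ker A\cap\Ker B$ for every $x\in\mathbb{C}^d$, and by the standing assumption \eqref{eq1} this intersection is $\{0\}$. Hence $Tx=0$ for all $x$, which is exactly the statement $T=0$.

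Equivalently, and perhaps more transparently, I would argue columnwise. Writing $T$ in terms of its columns $t_1,\dots,t_d\in\mathbb{C}^d$, the equation $AT=0$ says precisely that $At_j=0$ for each $j$, i.e. every column of $T$ belongs to $\Ker A$; likewise $BT=0$ puts every column in $\Ker B$. Thus each $t_j\in\Ker A\cap\Ker B=\{0\}$, so every column vanishes and $T=0$. The two formulations are the same computation seen from slightly different angles.

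There is essentially no genuine obstacle in this lemma; it is a one-line fact of linear algebra. The only thing one must be careful about is the direction of the inclusion: the equations $AT=BT=0$ constrain the image of $T$ (equivalently, its columns), \emph{not} its kernel, and it is this image that is forced into the trivial intersection $\Ker A\cap\Ker B$. Once that is kept straight, the conclusion is immediate.
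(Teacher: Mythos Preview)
Your proof is correct and matches the paper's argument essentially verbatim: the paper also fixes an arbitrary $x\in\mathbb{C}^d$, notes that $ATx=BTx=0$ forces $Tx\in\Ker A\cap\Ker B=\{0\}$, and concludes $T=0$. Your columnwise reformulation is just a cosmetic variant of the same idea.
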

\begin{proof}
To prove $T=0$ we will show that $Tx=0$ for all $x\in\mathbb{C}^{d}$. The assumption that $ATx=0$ and $BTx=0$ for all $x\in\mathbb{C}^{d}$ implies that  $Tx\in KerA$ and $Tx\in KerB$. It follows that $Tx\in KerA\cap KerB.$ But since we have $KerA\cap KerB=\{0\}$, it follows that $Tx=0$ and hence $T=0$.
\end{proof}

We obtain then the main result of this section.

\begin{theorem}\label{t1} Suppose $\mathcal{B}$ is a commutative subalgebra of $\mathcal{M}_{d\times d}(\mathbb{C})$, and $A, B\in \mathcal{B}'$. Then
the family $\mathcal{F}_{A,B}^{\mathcal{B}}$ is a commutative linear subspace of $\mathcal{T}_{n,d}(\mathcal{B})$. If $A,B$ satisfy condition~\eqref{eq1}, then $\mathcal{F}_{A,B}^{\mathcal{B}}$ is an algebra. 
\end{theorem}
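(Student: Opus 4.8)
The plan is to treat the three claims separately: that $\mathcal{F}_{A,B}^{\mathcal{B}}$ is a linear subspace of $\mathcal{T}_{n,d}(\mathcal{B})$, that any two of its elements commute, and that it is closed under multiplication. The subspace property is immediate and needs no hypothesis on $A,B$: if $\mathbf{T}=(T_{p-q})$ and $\mathbf{U}=(U_{p-q})$ belong to $\mathcal{F}_{A,B}^{\mathcal{B}}$ and $\lambda,\mu\in\mathbb{C}$, then each $\lambda T_j+\mu U_j$ again lies in the linear space $\mathcal{B}$, and the corresponding linear combination of the defining relations gives $A(\lambda T_j+\mu U_j)=B(\lambda T_{j-n}+\mu U_{j-n})$ for $j=1,\dots,n-1$; hence $\lambda\mathbf{T}+\mu\mathbf{U}\in\mathcal{F}_{A,B}^{\mathcal{B}}$.

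The rest of the argument rests on a single identity. Fix $\mathbf{T},\mathbf{U}\in\mathcal{F}_{A,B}^{\mathcal{B}}$ and indices $i,j\in\{1,\dots,n-1\}$, and put $W:=T_iU_{j-n}-T_{i-n}U_j$. First I would show $AW=BW=0$. Using $AT_i=BT_{i-n}$, $AU_j=BU_{j-n}$, and the fact that $A,B\in\mathcal{B}'$ commute with every element of $\mathcal{B}$, one computes
\[
AW=BT_{i-n}U_{j-n}-T_{i-n}(AU_j)=BT_{i-n}U_{j-n}-T_{i-n}(BU_{j-n})=0,
\]
and symmetrically $BW=T_i(BU_{j-n})-(BT_{i-n})U_j=AT_iU_j-AT_iU_j=0$. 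If $A,B$ satisfy \eqref{eq1}, then Lemma~\ref{le:kernel A and B}, applied to the $d\times d$ matrix $W$, yields $W=0$; that is, $T_iU_{j-n}=T_{i-n}U_j$ for all $i,j\in\{1,\dots,n-1\}$.

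This identity delivers the two remaining assertions. It is exactly condition~\eqref{eq:basic product condition}, so by Lemma~\ref{le:condition for product to be Toeplitz} the product $\mathbf{TU}$ is again block Toeplitz; moreover, since $\mathcal{B}$ is an algebra, each block of $\mathbf{TU}$ is a sum of products $T_aU_b\in\mathcal{B}$ and so lies in $\mathcal{B}$. For commutativity I would write $(\mathbf{TU})_{p,q}=\sum_{r=0}^{n-1}T_{p-r}U_{r-q}$ and, using the commutativity of $\mathcal{B}$, $(\mathbf{UT})_{p,q}=\sum_{r=0}^{n-1}T_{r-q}U_{p-r}$; reindexing the latter by $r\mapsto p+q-r$ turns it into a sum of the same summands $T_{p-r}U_{r-q}$ but over the interval $\{p+q-n+1,\dots,p+q\}$ in place of $\{0,\dots,n-1\}$. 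These two length-$n$ windows differ by a translation by $n$, and pairing the surplus terms expresses $(\mathbf{TU})_{p,q}-(\mathbf{UT})_{p,q}$ as a sum of expressions $T_{i-n}U_j-T_iU_{j-n}$ with $i,j\in\{1,\dots,n-1\}$, each vanishing by the identity above. Hence $\mathbf{TU}=\mathbf{UT}$.

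Finally, to conclude $\mathbf{TU}\in\mathcal{F}_{A,B}^{\mathcal{B}}$ I must check the defining relation for its blocks. Writing $\mathbf{TU}=(V_{p-q})$, I would read $V_j$ off the block in position $(j,0)$ and $V_{j-n}$ off the block in position $(0,n-j)$, and verify $AV_j=BV_{j-n}$ by a direct computation invoking only the relations $AT_m=BT_{m-n}$, $AU_m=BU_{m-n}$ and the commutation of $A,B$ with $\mathcal{B}$; the summands split into a range where the relation for $\mathbf{T}$ applies and a complementary range where the relation for $\mathbf{U}$ applies, and they cancel in pairs. I expect the principal difficulty to be precisely this index bookkeeping—ensuring that the indices to which the defining relations are applied actually fall in $\{1,\dots,n-1\}$—rather than anything conceptual, since the entire content is concentrated in the identity $W=0$ and its reduction, through Lemma~\ref{le:kernel A and B}, to the nondegeneracy hypothesis \eqref{eq1}. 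It is worth noting that both the commutativity and the multiplicative closure rest on \eqref{eq1} in this way; only the linear-subspace property holds for arbitrary $A,B\in\mathcal{B}'$.
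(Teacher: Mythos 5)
Your proposal is correct in substance, and for the part of the theorem that carries real content (multiplicative closure under~\eqref{eq1}) it uses the same mechanism as the paper: show that $A$ and $B$ both annihilate the relevant difference of products, then apply Lemma~\ref{le:kernel A and B}. The organizational differences are worth recording. You prove the identity $W:=T_iU_{j-n}-T_{i-n}U_j=0$ first and then invoke Lemma~\ref{le:condition for product to be Toeplitz}; the paper never uses that lemma in this proof, but instead applies the kernel lemma directly to the diagonal differences $(\mathbf{T}\mathbf{U})_{i,j}-(\mathbf{T}\mathbf{U})_{i+1,j+1}=T_{i-n+1}U_{n-1-j}-T_{i+1}U_{-1-j}$, which is the same computation in disguise (those differences are exactly terms of the form $-W_{a,b}$). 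In the last step, where you check $AV_j=BV_{j-n}$ for the product, your insistence on splitting the sum into the range where $AT_m=BT_{m-n}$ applies and the complementary range where $AU_m=BU_{m-n}$ applies is not pedantry: the paper's own display applies the relation $AU_k=BU_{k-n}$ at $k=0$, where no such relation exists (there is no $U_{-n}$), and then identifies the result with the nonexistent entry $(\mathbf{T}\mathbf{U})_{i,n}$. Your bookkeeping is the correct repair, and it does close: with $V_j=(\mathbf{T}\mathbf{U})_{j,0}$ and $V_{j-n}=(\mathbf{T}\mathbf{U})_{0,n-j}$, both $AV_j$ and $BV_{j-n}$ reduce, after reindexing, to $B\bigl(\sum_{k=0}^{j-1}T_{j-k-n}U_k+\sum_{k=j}^{n-1}T_{j-k}U_{k-n}\bigr)$.

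The one place you deviate from the \emph{statement} (rather than from the paper's proof) is commutativity: the theorem asserts that $\mathcal{F}_{A,B}^{\mathcal{B}}$ is commutative for arbitrary $A,B\in\mathcal{B}'$, the paper's proof dismisses this as ``easy to see,'' and your argument establishes it only under~\eqref{eq1}, since it rests on the identity $W=0$. Your caution is justified, and in fact the unconditional claim is false: for $A=B=0$ one has $\mathcal{F}_{0,0}^{\mathcal{B}}=\mathcal{T}_{n,d}(\mathcal{B})$, which is not commutative for $n\ge 2$ (the block down-shift with $T_1=I$ and the block up-shift with $U_{-1}=I$ lie in it and do not commute). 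What survives without~\eqref{eq1} is only the weaker statement that $A$ and $B$ annihilate $(\mathbf{T}\mathbf{U})_{p,q}-(\mathbf{U}\mathbf{T})_{p,q}$; and, as Section~\ref{se:schur} shows, commutativity can still hold for particular $A,B$ violating~\eqref{eq1}, so~\eqref{eq1} is sufficient but not necessary. In short, your proof establishes exactly the part of the theorem that is provable, and it localizes precisely where~\eqref{eq1} enters — something the paper's proof obscures.
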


\begin{proof}
It is easy to see that $\mathcal{F}_{A,B}^{\mathcal{B}}$ is a commutative linear space. Let us then assume that condition~\eqref{eq1} is satisfied.
To show that  $\mathcal{F}_{A,B}^{\mathcal{B}}$  is closed under  matrix multiplication, 
suppose that $\mathbf{T}$ and $\mathbf{U}$ are two arbitrary elements of $\mathcal{F}_{A,B}^{\mathcal{B}}$; so
\begin{equation}\label{eq:FABforTU}
\begin{split}
\mathbf{T}=(T_{p-q})_{p,q=0}^{n-1},&\quad
AT_{j}=BT_{j-n}\quad j=1,2,\cdots,n-1,\\
\mathbf{U}=(U_{p-q})_{p,q=0}^{n-1},&\quad 
AU_{j}=BU_{j-n}\quad j=1,2,\cdots,n-1.
\end{split}
\end{equation}
The entry at the position $(i,j)$ of $\mathbf{T}\mathbf{U}$ is 
\begin{equation}\label{eq:formula entry product}
(\mathbf{T}\mathbf{U})_{i,j}= \sum_{k=0}^{n-1}T_{i-k}U_{k-j},\quad i,j=0,1,\cdots n-1,
\end{equation}
and therefore
\[
(\mathbf{T}\mathbf{U})_{i,j}-(\mathbf{T}\mathbf{U})_{i+1,j+1}=
T_{i-n+1}U_{n-1-j}-T_{i+1}U_{-1-j}.
\]
Multiplying with $ A $ and with $ B $ respectively and using formulas~\eqref{eq:FABforTU},
it follows that
\[
\begin{split}
A((\mathbf{T}\mathbf{U})_{i,j}-(\mathbf{T}\mathbf{U})_{i+1,j+1})&=0,\\
B((\mathbf{T}\mathbf{U})_{i,j}-(\mathbf{T}\mathbf{U})_{i+1,j+1})&=0.
\end{split}
\]
We may then apply Lemma~\ref{le:kernel A and B} to conclude that
\[
(\mathbf{T}\mathbf{U})_{i,j}-(\mathbf{T}\mathbf{U})_{i+1,j+1}=0,
\]
and thus $\mathbf{V}:=\mathbf{T}\mathbf{U}\in \mathcal{T}_{n.d}(\mathcal{B})$. Therefore $ \mathbf{V}=(V_{i-j})_{i,j=0}^{n-1} $.

To prove that $\mathbf{V}\in \mathcal{F}_{A,B}^{\mathcal{B}} $, 
 consider $j=0$ in~\eqref{eq:formula entry product}. Then
 \[
 \begin{split}AV_i&=
  A (\mathbf{T}\mathbf{U})_{i,0}= A( \sum_{k=0}^{n-1}T_{i-k}U_{k} )
 = \sum_{k=0}^{n-1} T_{i-k}AU_{k}\\
& = \sum_{k=0}^{n-1} T_{i-k}BU_{k-n}
 =B(\sum_{k=0}^{n-1} T_{i-k}U_{k-n})= B(\mathbf{T}\mathbf{U})_{i,n}=BV_{i-n}.
 \end{split}
  \]

Therefore $\mathbf{V}\in\mathcal{F}_{A,B}^{\mathcal{B}}$. This proves, along with the earlier fact that $\mathcal{F}_{A,B}^{\mathcal{B}}$ is a linear subspace, that $\mathcal{F}_{A,B}^{\mathcal{B}}$  is an algebra in $\mathcal{T}_{n,d}$. 
\end{proof}

\begin{remark}\label{re:basic condition not necessary}
	Condition~\eqref{eq:definition of FAB} has been shown to imply that $\mathcal{F}_{A,B}^{\mathcal{B}}$ is an algebra. However, the converse is not true; an example will appear in Section~\ref{se:schur} below.
\end{remark}

 The next result concerns  the maximality of $\mathcal{F}_{A,B}^{\mathcal{B}}$ as a commutative algebra in $\mathcal{T}.$
\begin{theorem} \label{th:max com}
	Let $\mathcal{B}$ be a commutative subalgebra of $\mathcal{M}_{d\times d}(\mathbb{C})$. The following assertions are equivalent:
	\begin{enumerate}
		
		\item $\mathcal{B}$ is a maximal commutative subalgebra of $\mathcal{M}_{d\times d}(\mathbb{C})$.
		
			\item The family $\mathcal{F}_{A,B}^{\mathcal{B}}$ is a maximal commutative algebra in $ \mathcal{T}_{n,d} $.

	\end{enumerate}

\end{theorem}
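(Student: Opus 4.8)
The plan is to prove the two implications separately, using throughout the elementary fact recorded in the proof of Lemma~\ref{le:inverse closed}, namely that a commutative subalgebra $\mathcal{B}\subseteq\mathcal{M}_{d\times d}(\mathbb{C})$ is maximal commutative if and only if $\mathcal{B}=\mathcal{B}'$. I also take for granted, as in Theorem~\ref{t1}, that $A,B\in\mathcal{B}'$ satisfy condition~\eqref{eq1}, so that $\mathcal{F}_{A,B}^{\mathcal{B}}$ is genuinely a commutative algebra. The crucial reduction is the following: to prove that $\mathcal{F}_{A,B}^{\mathcal{B}}$ is maximal commutative in $\mathcal{T}_{n,d}$ it suffices to show that every $\mathbf{S}=(S_{p-q})_{p,q=0}^{n-1}\in\mathcal{T}_{n,d}$ commuting with every element of $\mathcal{F}_{A,B}^{\mathcal{B}}$ must itself lie in $\mathcal{F}_{A,B}^{\mathcal{B}}$. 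Indeed, any commutative algebra $\mathcal{G}$ with $\mathcal{F}_{A,B}^{\mathcal{B}}\subseteq\mathcal{G}\subseteq\mathcal{T}_{n,d}$ consists of such matrices $\mathbf{S}$, and would then be forced to coincide with $\mathcal{F}_{A,B}^{\mathcal{B}}$.

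For $(1)\Rightarrow(2)$ I would proceed in two steps. First, since $\mathcal{B}=\mathcal{B}'$, the block-scalar matrix $I_n\otimes C$ (with $C$ on each diagonal block and zero elsewhere) lies in $\mathcal{F}_{A,B}^{\mathcal{B}}$ for every $C\in\mathcal{B}$, the constraints reading $0=0$. Writing $\mathbf{S}(I_n\otimes C)=(I_n\otimes C)\mathbf{S}$ blockwise yields $S_{p-q}C=CS_{p-q}$ for all $C\in\mathcal{B}$, whence each entry $S_j\in\mathcal{B}'=\mathcal{B}$; thus $\mathbf{S}\in\mathcal{T}_{n,d}(\mathcal{B})$. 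Second, maximality gives $A,B\in\mathcal{B}'=\mathcal{B}$, so $A$ and $B$ commute with each other and with every $S_j$. I then test $\mathbf{S}$ against the element $\mathbf{T}\in\mathcal{F}_{A,B}^{\mathcal{B}}$ whose only nonzero entries are $T_1=B$ and $T_{1-n}=A$ (admissible since $AT_1=AB=BA=BT_{1-n}$). Evaluating $\mathbf{S}\mathbf{T}-\mathbf{T}\mathbf{S}$ blockwise and using the commutativity of $\mathcal{B}$, the surviving equations are exactly $S_{j-n}B=S_jA$ for $j=1,\dots,n-1$, that is $AS_j=BS_{j-n}$. Hence $\mathbf{S}\in\mathcal{F}_{A,B}^{\mathcal{B}}$, and by the reduction $\mathcal{F}_{A,B}^{\mathcal{B}}$ is maximal commutative.

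For $(2)\Rightarrow(1)$ I argue by contraposition. If $\mathcal{B}$ is not maximal commutative then $\mathcal{B}\subsetneq\mathcal{B}'$, so I pick $C\in\mathcal{B}'\setminus\mathcal{B}$ and form the block Toeplitz matrix $\mathbf{C}=I_n\otimes C$. Since $C$ commutes with every entry of every $\mathbf{T}\in\mathcal{F}_{A,B}^{\mathcal{B}}$ (these entries lying in $\mathcal{B}$), a blockwise computation gives $\mathbf{C}\mathbf{T}=\mathbf{T}\mathbf{C}$, so $\mathbf{C}$ commutes with all of $\mathcal{F}_{A,B}^{\mathcal{B}}$, while $\mathbf{C}\notin\mathcal{F}_{A,B}^{\mathcal{B}}$ because its zeroth entry $C$ is not in $\mathcal{B}$. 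Let $\mathcal{G}$ be the algebra generated by $\mathcal{F}_{A,B}^{\mathcal{B}}$ together with $\mathbf{C}$. As all generators commute pairwise, $\mathcal{G}$ is commutative, and every element of $\mathcal{G}$ is a finite sum of terms $\mathbf{T}\mathbf{C}^k$ (with $\mathbf{T}\in\mathcal{F}_{A,B}^{\mathcal{B}}$) and powers $\mathbf{C}^k$; each such term is block Toeplitz, the $(p,q)$-block of $\mathbf{T}\mathbf{C}^k$ being $T_{p-q}C^k$, which depends only on $p-q$. Therefore $\mathcal{G}\subseteq\mathcal{T}_{n,d}$, and $\mathcal{F}_{A,B}^{\mathcal{B}}\subsetneq\mathcal{G}\subseteq\mathcal{T}_{n,d}$ exhibits $\mathcal{F}_{A,B}^{\mathcal{B}}$ as non-maximal.

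The main obstacle is the second step of $(1)\Rightarrow(2)$: the blockwise evaluation of $\mathbf{S}\mathbf{T}-\mathbf{T}\mathbf{S}$ demands careful bookkeeping of the index ranges $0\le k\le n-1$ in the sums $\sum_k S_{p-k}T_{k-q}$ and $\sum_k T_{p-k}S_{k-q}$, since distinct blocks $(p,q)$ activate the two nonzero entries of $\mathbf{T}$ in different combinations, and one must check that the resulting relations, taken over all $(p,q)$, produce $AS_j=BS_{j-n}$ for every $j$ and nothing contradictory. What rescues the computation is precisely that maximality forces $A,B\in\mathcal{B}$, so that every matrix in sight lies in the commutative algebra $\mathcal{B}$ and the argument collapses to the scalar Toeplitz bookkeeping underlying Theorem~\ref{th:shalom1}.
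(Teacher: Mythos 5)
Your proof is correct, and it splits naturally into a part that matches the paper and a part that does not. The implication $(1)\Rightarrow(2)$ is essentially the paper's own argument: the paper also tests an arbitrary commuting element first against the block diagonal matrices $\operatorname{diag}(U,\dots,U)$, $U\in\mathcal{B}$, to force all entries into $\mathcal{B}'=\mathcal{B}$, and then against a single matrix $\mathbf{J}$ whose only nonzero blocks are $B$ on the subdiagonal and $A$ in the upper-right corner --- exactly your test matrix $T_1=B$, $T_{1-n}=A$; the only cosmetic difference is that the paper extracts the relations $AT_i=BT_{i-n}$ from the requirement that the product $\mathbf{J}\mathbf{T}$ be block Toeplitz (it lies in $\mathcal{F}\subseteq\mathcal{T}_{n,d}$), whereas you extract them from the vanishing of the commutator; the index bookkeeping is the same and your ``surviving equations'' are indeed exactly $AS_j=BS_{j-n}$, with all other positions giving identities. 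The converse is where you genuinely diverge. The paper argues: if $\mathcal{B}\subseteq\mathcal{B}_1$ with $\mathcal{B}_1$ commutative, then $\mathcal{F}_{A,B}^{\mathcal{B}}\subseteq\mathcal{F}_{A,B}^{\mathcal{B}_1}$, maximality forces equality, and hence $\mathcal{B}=\mathcal{B}_1$. For this to invoke maximality one needs $\mathcal{F}_{A,B}^{\mathcal{B}_1}$ to be a \emph{commutative algebra} in $\mathcal{T}_{n,d}$, which via Theorem~\ref{t1} would require $A,B\in\mathcal{B}_1'$; this does not follow from $A,B\in\mathcal{B}'$, since enlarging $\mathcal{B}$ shrinks the commutant. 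Your contrapositive argument sidesteps this delicacy entirely: adjoining the single block-scalar matrix $I_n\otimes C$ with $C\in\mathcal{B}'\setminus\mathcal{B}$ gives a strictly larger set whose generated algebra is commutative by construction (all generators commute pairwise) and stays inside $\mathcal{T}_{n,d}$, since every word collapses to $\mathbf{T}\mathbf{C}^k$ with $(p,q)$ block $T_{p-q}C^k$. So your converse is more self-contained than the paper's and in fact repairs a small imprecision in it; the price is only the explicit (and harmless) standing assumption of condition~\eqref{eq1}, which is what makes assertion $(2)$ meaningful in the first place.
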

\begin{proof}
	Suppose that $\mathcal{B}$ is a maximal commutative subalgebra of $\mathcal{M}_{d\times d}(\mathbb{C})$.  Suppose that $\mathcal{F}$ is a commutative algebra in $ \mathcal{T}_{n,d} $ such that $\mathcal{F}_{A,B}^{\mathcal{B}}\subset\mathcal{F}$. Let $\mathbf{T}=(T_{p-q})_{p,q=0}^{n-1}$
	be any arbitrary element of $\mathcal{F}$. For an arbitrary $ U\in \mathcal{B}$ define 
	$\mathbf{U}=diag
	\begin{pmatrix}
	U& U&\cdots U
	\end{pmatrix}.
	$
	Then $\mathbf{U}\in\mathcal{F}_{A,B}^{\mathcal{B}}$. Since $\mathcal{F}_{A,B}^{\mathcal{B}}\subset\mathcal{F}$ and $\mathcal{F}$ is a commutative algebra in $\mathcal{T}$, we have  $\mathbf{T}\mathbf{U}=\mathbf{U}\mathbf{T}$.
	Comparing corresponding entries of $\mathbf{T}\mathbf{U}$ and $\mathbf{U}\mathbf{T}$, we obtain $T_{p}U=UT_{p}$ for all $p$, $ |p|\le n-1 $.
	Since $U\in\mathcal{B}$ and $\mathcal{B}$ is a maximal commutative subalgebra of $\mathcal{M}_{d\times d}(\mathbb{C})$, it follows that $T_{p}\in\mathcal{B}$ for all~$p$.

	Now let $\mathbf{T}\in\mathcal{F}$ be any arbitrary element of the form $\mathbf{T}=(T_{p-q})_{p,q=0}^{n-1}$ and consider the fixed element $\mathbf{J}\in\mathcal{F}_{A,B}^{\mathcal{B}}$ defined by 
	\[\mathbf{J}=
	\begin{pmatrix}
	
	0     &  0    &0&\ldots  & A\\
	B     &  0    &0&\ldots  & 0\\
	0&B& 0&\ldots&0\\
	\vdots    & \vdots&\vdots&    \ddots  & \vdots\\
	0   &    0   & \ldots&B & 0
	\end{pmatrix}
	\]
Using formula~\eqref{eq:formula entry product} for $ \mathbf{J}\mathbf{T} $, one obtains
	\begin{equation}\label{eq4}
	(\mathbf{J}\mathbf{T})_{i,j}=
	\begin{cases}
	AT_{n-1-j} &\text{ for }i=0,\quad j=0, \dots, n-1,\\
	BT_{i-1-j} &\text{ for }i\not=0,\quad j=0, \dots, n-1.
	\end{cases}
	\end{equation}
	Since $\mathcal{F}$ is contained in $ \mathcal{T}_{n,d} $, the entries along the main diagonals of $\mathbf{J}\mathbf{T}$ should be equal.
	In particular, 
	\[
	(\mathbf{J}\mathbf{T})_{0,j}=(\mathbf{J}\mathbf{T})_{1,j+1}
	\]
	for $ j=1, \dots, n-2 $.
	By~\eqref{eq4} this means $ AT_{n-1-j}=BT_{-j-1} $, or, by denoting $ i=n-1-j $,
	\[
	AT_i=BT_{i-n}
	\] 
	for $ i=1, \dots, n-1 $.
	Therefore
	  $\mathbf{T}\in\mathcal{F}_{A,B}^{\mathcal{B}}$. Consequently  $\mathcal{F}\subset\mathcal{F}_{A,B}^{\mathcal{B}}$. whence $\mathcal{F}_{A,B}^{\mathcal{B}}$ is a maximal commutative algebra in $\mathcal{T}_{n,d}$.
	  
	Conversely, suppose that $\mathcal{F}_{A,B}^{\mathcal{B}}$ is a maximal commutative algebra in $\mathcal{T}_{n,d}$. If $\mathcal{B}_{1}$ is a subalgebra of $\mathcal{M}_{d\times d}(\mathbb{C})$ such that $\mathcal{B}\subset\mathcal{B}_{1}$, then  $\mathcal{F}_{A,B}^{\mathcal{B}}\subset\mathcal{F}_{A,B}^{\mathcal{B}_{1}}$. 
	The maximality of
$\mathcal{F}_{A,B}^{\mathcal{B}}$ implies that $\mathcal{F}_{A,B}^{\mathcal{B}}=\mathcal{F}_{A,B}^{\mathcal{B}_{1}}$, whence it follows easily that
 $\mathcal{B}=\mathcal{B}_{1}$. Therefore
	$\mathcal{B}$ is a maximal commutative subalgebra of $\mathcal{M}_{d\times d}(\mathbb{C})$. 
\end{proof}

As a consequence of Theorem~\ref{th:max com}, we will consider in the sequel only maximal commutative algebras $ \mathcal{B} $, whence $ \mathcal{B}=\mathcal{B}' $. We will therefore take in the definition of the algebras $\mathcal{F}_{A,B}^{\mathcal{B}}$ matrices $ A,B\in \mathcal{B} $.

These algebras include an  important general class of maximal block Toeplitz algebras, as shown by the next result. 

\begin{theorem}\label{th:invertible entry}
	Let $\mathcal{B}$ a maximal commutative subalgebra of $\mathcal{M}_{d\times d}(\mathbb{C})$ and $\mathcal{A}$ an algebra contained in  $\mathcal{T}_{n,d}(\mathcal{B})$. Suppose $\mathcal{A}$ contains an element  $\mathbf{T}$ such that $T_{r}$ is invertible for some $r\neq0$. Then $\mathcal{A}\subset\mathcal{F}_{A,B}^{\mathcal{B}}$ for some $A$ and $B$. 
\end{theorem}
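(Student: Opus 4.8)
The plan is to read off explicit matrices $A,B\in\mathcal{B}$ directly from the distinguished element $\mathbf{T}$, and then to verify that the defining relation of $\mathcal{F}_{A,B}^{\mathcal{B}}$ is satisfied by every element of $\mathcal{A}$. The guiding observation is that, since $\mathcal{A}$ is an algebra contained in $\mathcal{T}_{n,d}$, the product of $\mathbf{T}$ with any $\mathbf{S}\in\mathcal{A}$ is again block Toeplitz; hence Lemma~\ref{le:condition for product to be Toeplitz} applies and yields a family of identities linking the blocks of $\mathbf{T}$ to those of $\mathbf{S}$. The invertibility of some $T_r$ is exactly what lets one of these identities be solved into the form $AS_j=BS_{j-n}$.

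First I would pin down the invertible block. Since $r\neq 0$, set $m=r$ when $r>0$ and $m=r+n$ when $r<0$; in either case $m\in\{1,\dots,n-1\}$ and one of the two blocks $T_m$, $T_{m-n}$ coincides with the invertible block $T_r$. I then define $A=T_{m-n}$ and $B=T_m$. Both lie in $\mathcal{B}$ because $\mathbf{T}\in\mathcal{T}_{n,d}(\mathcal{B})$, and since $\mathcal{B}$ is maximal commutative we have $\mathcal{B}=\mathcal{B}'$, so this is a legitimate choice of parameters for $\mathcal{F}_{A,B}^{\mathcal{B}}$. Moreover one of $A$, $B$ equals $T_r$ and is therefore invertible, so condition~\eqref{eq1} holds trivially and $\mathcal{F}_{A,B}^{\mathcal{B}}$ is genuinely an algebra by Theorem~\ref{t1}.

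Next, for an arbitrary $\mathbf{S}=(S_{p-q})_{p,q=0}^{n-1}\in\mathcal{A}$, I would use that $\mathbf{T}\mathbf{S}\in\mathcal{A}\subset\mathcal{T}_{n,d}$ is block Toeplitz, so Lemma~\ref{le:condition for product to be Toeplitz} gives $T_aS_{b-n}=T_{a-n}S_b$ for all $a,b\in\{1,\dots,n-1\}$. Specializing $a=m$ and $b=j$ produces $T_mS_{j-n}=T_{m-n}S_j$, that is $BS_{j-n}=AS_j$, hence $AS_j=BS_{j-n}$ for $j=1,\dots,n-1$. Since the blocks $S_j$ already lie in $\mathcal{B}$, this says precisely that $\mathbf{S}\in\mathcal{F}_{A,B}^{\mathcal{B}}$; as $\mathbf{S}$ was arbitrary, $\mathcal{A}\subset\mathcal{F}_{A,B}^{\mathcal{B}}$, as required.

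The argument is short and there is no single hard step; the only points that demand care are the bookkeeping with the index $m$ (so that a single choice of $A$ and $B$ covers both signs of $r$) and the exact matching of the product condition of Lemma~\ref{le:condition for product to be Toeplitz} with the defining relation $AS_j=BS_{j-n}$. I would double-check this matching by writing out the specialization $a=m$, $b=j$ explicitly, since it is the crux of the whole proof, and note in passing that the invertibility hypothesis enters only to guarantee condition~\eqref{eq1}, ensuring that the target $\mathcal{F}_{A,B}^{\mathcal{B}}$ is not merely a linear space but an algebra.
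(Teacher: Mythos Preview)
Your proof is correct and follows the same core idea as the paper: apply Lemma~\ref{le:condition for product to be Toeplitz} to the product $\mathbf{T}\mathbf{S}$ and specialize one index to the position of the invertible block. The only difference is cosmetic: the paper normalizes by multiplying through by $T_r^{-1}$, taking $A=I$ and $B=T_r^{-1}T_{r-n}$ (which requires Lemma~\ref{le:inverse closed} to ensure $B\in\mathcal{B}$), whereas you take $A=T_{m-n}$, $B=T_m$ directly and thereby avoid that lemma altogether---a slightly cleaner choice.
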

\begin{proof}
	Let $\mathbf{T}=(T_{p-q})_{p,q=0}^{n-1}$ and $\mathbf{U}=(U_{p-q})_{p,q=0}^{n-1}$ be two elements of $\mathcal{A}$. Since $\mathcal{A}$ is an algebra contained in $\mathcal{T}_{n,d}$,  the product $\mathbf{TU}\in\mathcal{A}$ if and only if 
	\begin{equation}\label{eq6}
	T_{p}U_{q-n}=T_{p-n}U_{q}\quad (q=1,2,\cdots n-1). 
	\end{equation}
	Suppose $r>0$. Since we have assumed that $T_r$ is invertible, it follows from \eqref{eq6} that
	\[
	U_{q-n}=T_{r}^{-1}T_{r-n}U_{q}\quad (q=1,2,\cdots n-1).
	\]
	The subalgebra $\mathcal{B}$ is inverse closed by Lemma~\ref{le:inverse closed}, and so $T_{r}^{-1}T_{r-n}\in\mathcal{B}$.
	It follows then that, if we take $A=I$ and $B=T_{r}^{-1}T_{r-n}$, then $\mathcal{A}\subset\mathcal{F}_{A,B}^{\mathcal{B}}$. A simlar argument works for $r<0$, finishing the proof.
\end{proof}

\begin{corollary}\label{co:invertible entry}
		Let $\mathcal{B}$ a maximal commutative subalgebra of $\mathcal{M}_{d\times d}(\mathbb{C})$ and $\mathcal{A}$ a maximal subalgebra of $\mathcal{T}_{n,d}(\mathcal{B})$.  Suppose $\mathcal{A}$ contains an element  $\mathbf{T}$ such that $T_{p}$ is invertible for some $p\neq0$. Then $\mathcal{A}=\mathcal{F}_{A,B}^{\mathcal{B}}$ for some $A$ and $B$. 
\end{corollary}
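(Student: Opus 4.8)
The plan is to derive this directly from Theorem~\ref{th:invertible entry} by upgrading the inclusion it provides to an equality using the maximality hypothesis. First I would invoke Theorem~\ref{th:invertible entry}: since $\mathcal{A}$ is an algebra contained in $\mathcal{T}_{n,d}(\mathcal{B})$ possessing an element $\mathbf{T}$ with $T_p$ invertible for some $p\neq 0$, that theorem produces matrices $A,B$ with $\mathcal{A}\subset\mathcal{F}_{A,B}^{\mathcal{B}}$. Inspecting the proof of that theorem, the matrices may be taken to be $A=I$ and $B=T_p^{-1}T_{p-n}$ (when $p>0$; symmetrically for $p<0$), both lying in $\mathcal{B}=\mathcal{B}'$ by the inverse-closedness established in Lemma~\ref{le:inverse closed}.

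The key observation is that this particular choice $A=I$ forces $\mathrm{Ker}\,A=\{0\}$, so that condition~\eqref{eq1} holds automatically, $\mathrm{Ker}\,A\cap\mathrm{Ker}\,B=\{0\}$. Theorem~\ref{t1} then guarantees that $\mathcal{F}_{A,B}^{\mathcal{B}}$ is a commutative algebra, and by its very definition its entries lie in $\mathcal{B}$, so $\mathcal{F}_{A,B}^{\mathcal{B}}$ is a subalgebra contained in $\mathcal{T}_{n,d}(\mathcal{B})$.

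We therefore obtain a chain $\mathcal{A}\subset\mathcal{F}_{A,B}^{\mathcal{B}}\subseteq\mathcal{T}_{n,d}(\mathcal{B})$ in which the middle term is a genuine subalgebra of $\mathcal{T}_{n,d}(\mathcal{B})$. The maximality of $\mathcal{A}$ as a subalgebra of $\mathcal{T}_{n,d}(\mathcal{B})$ then forces $\mathcal{A}=\mathcal{F}_{A,B}^{\mathcal{B}}$, which is exactly the desired conclusion.

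I do not expect a genuine obstacle here, since the substantive content is already carried by Theorems~\ref{th:invertible entry} and~\ref{t1}. The only point requiring a moment's care is to confirm that the object $\mathcal{F}_{A,B}^{\mathcal{B}}$ appearing in the inclusion is in fact an algebra, so that the maximality of $\mathcal{A}$ can legitimately be applied to it; this is precisely what the choice $A=I$ secures through condition~\eqref{eq1}.
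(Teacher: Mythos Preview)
Your argument is correct and matches the paper's intended route: the paper states the corollary without proof immediately after Theorem~\ref{th:invertible entry}, so the implicit derivation is exactly ``inclusion from Theorem~\ref{th:invertible entry}, equality from maximality.'' Your added care in observing that the specific choice $A=I$ satisfies condition~\eqref{eq1}, so that Theorem~\ref{t1} certifies $\mathcal{F}_{A,B}^{\mathcal{B}}$ as a genuine subalgebra to which maximality can be applied, fills in a step the paper leaves tacit.
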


In the next sections we will investigate the algebras 
$\mathcal{F}_{A,B}^{\mathcal{B}}$ corresponding to several cases of maximal subalgebras $ \mathcal{B} $.

\section{Diagonal entries} 

To discuss the cases of entries belonging to the algebra of diagonal matrices with respect to a given basis, we use the next lemma, which describes the structure of such block Toeplitz matrices.

\begin{lemma}\label{l6}
	Suppose $\mathbf{T}=(T_{p-q})_{p,q=0}^{n-1}$ is a block Toeplitz matrix, such that each $T_{j}$ is diagonal. Then there is a change of basis that brings $\mathbf{T}$ into the following form 
	\[\mathbf{T}^{\prime}=diag
	\begin{pmatrix}
	T_{1}^{\prime}&T_{2}^{\prime}&\cdots T_{d}^{\prime}
	\end{pmatrix}
	,\]
	where for every $k=1,2,\cdots d$, $T_{k}^{\prime}$ is a scalar 
	Toeplitz matrix of order $ n $. 
\end{lemma}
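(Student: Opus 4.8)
The plan is to exhibit the change of basis explicitly as a permutation of the standard basis vectors and to check that this permutation decouples $\mathbf{T}$ into $d$ independent scalar Toeplitz blocks. First I would index the standard basis of $\mathbb{C}^{nd}$ by pairs $(p,a)$, where $p=0,\dots,n-1$ records the block and $a=1,\dots,d$ records the position inside the block. Writing $T_j=diag\begin{pmatrix} t_j^{(1)} & \cdots & t_j^{(d)}\end{pmatrix}$, the entry of $\mathbf{T}$ at row $(p,a)$ and column $(q,b)$ equals the $(a,b)$ entry of the block $T_{p-q}$; since each $T_j$ is diagonal, this entry is $t_{p-q}^{(a)}$ when $a=b$ and $0$ otherwise.

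The key step is then to regroup the basis vectors by the inner index $a$ rather than by the block index $p$. Concretely, let $P$ be the permutation matrix implementing the reordering $(p,a)\mapsto(a,p)$, so that conjugation by $P$ is a genuine change of basis. In the reordered basis the entry at position $((a,p),(b,q))$ is the old entry $t_{p-q}^{(a)}\delta_{ab}$; hence all cross terms with $a\neq b$ vanish, and for each fixed $a$ the surviving $n\times n$ block has $(p,q)$ entry $t_{p-q}^{(a)}$. This is precisely the scalar Toeplitz matrix $T_a'=(t_{p-q}^{(a)})_{p,q=0}^{n-1}$, so that $P^{-1}\mathbf{T}P=diag\begin{pmatrix}T_1' & \cdots & T_d'\end{pmatrix}$ with each $T_a'$ Toeplitz of order $n$.

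There is no real analytic obstacle here; the content is entirely combinatorial bookkeeping. The step requiring the most care is verifying that the diagonal structure of every block $T_j$ is exactly what forces the off-diagonal ($a\neq b$) blocks of $P^{-1}\mathbf{T}P$ to vanish, together with the observation that the surviving entries assemble into Toeplitz matrices precisely because $t_{p-q}^{(a)}$ depends on $p$ and $q$ only through the difference $p-q$. To keep the reshuffling transparent I would accompany the general index computation with a small explicit case, say $n=d=2$, before stating the conclusion.
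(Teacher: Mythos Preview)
Your proposal is correct and follows essentially the same route as the paper: both exhibit the change of basis as the ``reshuffling'' permutation that reorders the standard basis vectors $e_a^p$ from block-first order $(p,a)$ to coordinate-first order $(a,p)$, and both read off that the resulting diagonal blocks are the scalar Toeplitz matrices $T_k'=(t_{p-q}^{(k)})_{p,q=0}^{n-1}$. Your version is in fact more explicit in verifying that the $a\neq b$ blocks vanish and that the surviving entries are Toeplitz, which the paper simply asserts.
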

\begin{proof} Suppose that the original basis is
	\[
		\mathcal{E}=\{e_{1}^{0},e^{0}_{2
	},\cdots e^{0}_{d},e^{1}_{1},e^{1}_{2},\cdots e^{1}_{d},\cdots e^{n-1}_{1},e^{n-1}_{2},\cdots e^{n-1}_{d}\},
	\]
and $
	T_{j}=diag
	\begin{pmatrix}
	t_{j1}&t_{j2}&\cdots t_{jd}
	\end{pmatrix}
	$  for $j=0,\pm1,\cdots\pm (n-1)$. The new basis is then
	\[
	\mathcal{E}'=\{e_{1}^{0},e_{1}^{1
	},\cdots e_{1}^{n-1},e_{2}^{0},e_{2}^{1},\cdots e_{2}^{n-1},\cdots e_{d}^{0},e_{d}^{1},\cdots e_{d}^{n-1}\}
	\]
	(it is obtained by ``reshuffling'').
	With respect to $\mathcal{E}'$, $\mathbf{T}$ has the required form 
	$
	\mathbf{T}^{\prime}=diag
	\begin{pmatrix}
	T_{1}^{\prime}&T_{2}^{\prime}&\cdots T_{d}^{\prime}
	\end{pmatrix}
	$,
	where, for each $k=1,2,\cdots d$, 
	$ T_{k}^{\prime}=(t_{r-s,k})_{r,s=0}^n$,
 i.e. $T_{k}^{\prime}$ is a Toeplitz matrix of order $ n $. 
\end{proof}

It is clear from the proof that  $ \mathcal{E}' $ depends only on the base $ \mathcal{E} $,  not on the particular matrix $ \mathbf{T} $.

\begin{proposition}	
	Suppose that  $\mathcal{A}$ is any commutative algebra of block Toeplitz matrices contained in
	$\mathcal{X}=
	\left\lbrace 
	(T_{p-q})_{p,q=0}^{n-1
	}\Bigl|T_{j}\in\mathcal{D}
	\right\rbrace 
	$. Then $\mathcal{A}\subset\mathcal{F}_{A,B}^{\mathcal{D}}$ for some $A,B\in\mathcal{D}$.
\end{proposition}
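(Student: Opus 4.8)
The plan is to reduce the block-Toeplitz-with-diagonal-entries problem to $d$ uncoupled scalar Toeplitz problems, each of which is settled by Shalom's classification in Theorem~\ref{th:shalom1}. First I would invoke the reshuffling of Lemma~\ref{l6}: by the observation recorded immediately after its proof, a single change of basis $\mathcal{E}\to\mathcal{E}'$, depending only on $\mathcal{E}$ and not on the individual matrix, simultaneously transforms every element of $\mathcal{X}$ into a block-diagonal matrix $\mathrm{diag}(S_1,\dots,S_d)$ whose $k$-th block $S_k$ is a scalar Toeplitz matrix of order $n$. Under this one fixed isomorphism $\mathcal{X}$ is carried onto the set of all such block-diagonal matrices with arbitrary Toeplitz blocks, and multiplication is performed blockwise.

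Next I would record how the candidate family transforms under the same map. Writing a diagonal pair $A=\mathrm{diag}(a_1,\dots,a_d)$, $B=\mathrm{diag}(b_1,\dots,b_d)$ and $T_j=\mathrm{diag}(t_{j1},\dots,t_{jd})$, the defining relations $AT_j=BT_{j-n}$ of $\mathcal{F}_{A,B}^{\mathcal{D}}$ decouple into the scalar conditions $a_k t_{jk}=b_k t_{j-n,k}$ for $j=1,\dots,n-1$ and each $k$. Comparing with the description $\Pi_\alpha=\{(t_{p-q}):at_j=bt_{j-n}\}$ recalled in Section~4, these are exactly the conditions that the $k$-th reshuffled block lie in the generalized circulant $\Pi_{\alpha_k}$ determined by the pair $(a_k,b_k)$. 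Hence, under the reshuffling, $\mathcal{F}_{A,B}^{\mathcal{D}}$ is carried onto $\{\mathrm{diag}(S_1,\dots,S_d):S_k\in\Pi_{\alpha_k}\}$. Note that for this containment statement I need only the linear structure of $\mathcal{F}_{A,B}^{\mathcal{D}}$, so condition~\eqref{eq1} plays no role here.

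With these two identifications in place the argument is short. Given a commutative algebra $\mathcal{A}\subset\mathcal{X}$, I would reshuffle it; its elements are then block-diagonal, and since $\mathcal{A}$ is closed under multiplication each block projection $\pi_k$ is an algebra homomorphism, so every $\pi_k(\mathcal{A})$ is a commutative subalgebra of $\mathcal{T}_n$. As $\mathcal{T}_n$ is finite-dimensional, $\pi_k(\mathcal{A})$ is contained in a maximal commutative subalgebra of $\mathcal{T}_n$, which by Theorem~\ref{th:shalom1} is a generalized circulant $\Pi_{\alpha_k}$ for some $\alpha_k\in\mathbb{C}\cup\{\infty\}$. Choosing for each $k$ a pair $(a_k,b_k)$ realizing $\alpha_k$ (taking $a_k=1,\ b_k=\alpha_k$ when $\alpha_k$ is finite and $a_k=0,\ b_k=1$ when $\alpha_k=\infty$) and assembling $A=\mathrm{diag}(a_1,\dots,a_d)$, $B=\mathrm{diag}(b_1,\dots,b_d)\in\mathcal{D}$, every $\mathbf{T}\in\mathcal{A}$ has each reshuffled block in $\Pi_{\alpha_k}$, hence lies in $\mathcal{F}_{A,B}^{\mathcal{D}}$. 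Undoing the change of basis gives $\mathcal{A}\subset\mathcal{F}_{A,B}^{\mathcal{D}}$.

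I expect the main obstacle to be organizational rather than conceptual. One must verify that the single basis change of Lemma~\ref{l6} really acts uniformly on all of $\mathcal{A}$ and on $\mathcal{F}_{A,B}^{\mathcal{D}}$ at once (this is precisely the content of the remark following that lemma, and is what allows the clean blockwise picture), and one must keep the bookkeeping between the generalized-circulant parameters $(a_k,b_k)$ and the scalars $\alpha_k$ consistent, handling the degenerate block $\Pi_\infty$ (where $a_k=0$) separately. No estimate or delicate computation is involved; the essential mechanism is that the diagonal structure of the entries makes the problem split into $d$ independent scalar problems, each resolved by the known classification.
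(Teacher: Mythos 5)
Your proof is correct and follows essentially the same route as the paper's: reshuffle via Lemma~\ref{l6}, note that each block projection of $\mathcal{A}$ is a commutative subalgebra of $\mathcal{T}_n$ and hence lies in some generalized circulant $\Pi_{\alpha_k}$ by Theorem~\ref{th:shalom1}, then assemble diagonal matrices $A,B$ from the pairs $(a_k,b_k)$. The only (immaterial) difference is your choice $(a_k,b_k)=(1,\alpha_k)$ versus the paper's $(\alpha_k,1)$, which merely reflects the paper's own swap of $a$ and $b$ between~\eqref{generalized circulant} and the re-description of $\Pi_\alpha$ at the start of Section~4; either convention produces valid $A,B\in\mathcal{D}$ with $\Ker A\cap\Ker B=\{0\}$.
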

\begin{proof}
	Applying Lemma~\ref{l6}, we obtain a basis $ \mathcal{E}' $ with respect to which any element $ \mathbf{T}\in\mathcal{A} $ has the form
%
%
%
	$
	\mathbf{T}^{\prime}=diag
	\begin{pmatrix}
	T_{1}^{\prime}&T_{2}^{\prime}&\cdots T_{d}^{\prime}
	\end{pmatrix}
	$,
	where for each $k=1,2,\cdots d$ we have $ 
	T_{k}^{\prime}=(t_{r-s,k})_{r,s=0}^{d}$;
	that is, $T_{k}^{\prime}$ is a scalar 
	Toeplitz matrix. Since $ \mathcal{A} $ is closed with respect to multiplication, each of the component blocks has to be closed with respect to multiplication. It follows that 
	  for every $k=1,2,\cdots d, T_{k}^{\prime}\in\Pi_{\alpha_{k}}$ for some $\alpha_{k}\in\mathbb{C}\cup\{\infty\}$. 
	  
	  We may then define 
	  $ B=diag
	  \begin{pmatrix}
	  b_{1}&b_{2}&\cdots& b_{d}
	  \end{pmatrix}$ $ A=diag
	  \begin{pmatrix}
	  a_{1}&a_{2}&\cdots& a_{d}
	  \end{pmatrix}$ and as follows:
	  
	  --- if $ \alpha_k\not=\infty $, then $ a_k=\alpha_k $ and $ b_k=1 $;
	  
	  --- if $ \alpha_k=\infty $, then $ a_k=1 $ and $ b_k=0 $.

Then $KerA\cap KerB=\{0\}$, and it is easily checked that $\mathcal{A}\subset\mathcal{F}_{A,B}^{\mathcal{D}}$.	  
\end{proof}

\begin{corollary}\label{co:diagonal}
	If $\mathcal{A}$ is a maximal subalgebra of $\mathcal{T}_{n,d}(\mathcal{D})$, then $\mathcal{A}=\mathcal{F}_{A,B}^{\mathcal{D}}$ for some $A, B\in \mathcal{D}$. 	
\end{corollary}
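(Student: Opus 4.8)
The plan is to deduce Corollary~\ref{co:diagonal} directly from the preceding Proposition together with the maximality machinery already established in Theorem~\ref{th:max com}. First I would observe that the Proposition applies to our given $\mathcal{A}$, since a maximal subalgebra of $\mathcal{T}_{n,d}(\mathcal{D})$ is in particular a commutative algebra of block Toeplitz matrices with entries in $\mathcal{D}$ (commutativity follows because $\mathcal{D}$ is commutative and the diagonal blocks all commute, so any two such block Toeplitz matrices commute). Applying the Proposition yields matrices $A,B\in\mathcal{D}$ with $\operatorname{Ker}A\cap\operatorname{Ker}B=\{0\}$ such that $\mathcal{A}\subset\mathcal{F}_{A,B}^{\mathcal{D}}$.

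Next I would invoke the fact that $\mathcal{D}$, the algebra of diagonal matrices in a fixed basis, is a maximal commutative subalgebra of $\mathcal{M}_{d\times d}(\mathbb{C})$; this was recorded as the first item in the list of examples in Section~\ref{se:block toeplitz}. By Theorem~\ref{th:max com}, applied with $\mathcal{B}=\mathcal{D}$, the family $\mathcal{F}_{A,B}^{\mathcal{D}}$ is therefore a \emph{maximal} commutative algebra in $\mathcal{T}_{n,d}$. (One should note that the hypothesis $\operatorname{Ker}A\cap\operatorname{Ker}B=\{0\}$ supplied by the Proposition is exactly condition~\eqref{eq1}, so by Theorem~\ref{t1} the family is genuinely an algebra, and the equivalence in Theorem~\ref{th:max com} then certifies its maximality.)

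Finally I would combine the two inclusions. We have $\mathcal{A}\subset\mathcal{F}_{A,B}^{\mathcal{D}}$, where $\mathcal{F}_{A,B}^{\mathcal{D}}$ is a commutative algebra contained in $\mathcal{T}_{n,d}(\mathcal{D})$ and $\mathcal{A}$ is \emph{maximal} among subalgebras of $\mathcal{T}_{n,d}(\mathcal{D})$. Maximality of $\mathcal{A}$ forces $\mathcal{A}=\mathcal{F}_{A,B}^{\mathcal{D}}$, which is the assertion.

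The step I expect to require the most care is not a deep obstacle but a matter of matching hypotheses precisely: one must check that ``maximal subalgebra of $\mathcal{T}_{n,d}(\mathcal{D})$'' interacts correctly with the notion of ``maximal commutative algebra in $\mathcal{T}_{n,d}$'' used in Theorem~\ref{th:max com}. Because every subalgebra of $\mathcal{T}_{n,d}(\mathcal{D})$ is automatically commutative (its entries lie in the commutative algebra $\mathcal{D}$), the two maximality notions coincide on this subspace, so the containment $\mathcal{A}\subset\mathcal{F}_{A,B}^{\mathcal{D}}\subset\mathcal{T}_{n,d}(\mathcal{D})$ together with maximality of $\mathcal{A}$ and of $\mathcal{F}_{A,B}^{\mathcal{D}}$ closes the argument. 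The only thing to guard against is a degenerate choice of $A,B$; but the Proposition delivers $A,B$ satisfying~\eqref{eq1}, which is exactly what Theorem~\ref{t1} and Theorem~\ref{th:max com} need.
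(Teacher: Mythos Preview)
Your overall strategy---apply the preceding Proposition to get $\mathcal{A}\subset\mathcal{F}_{A,B}^{\mathcal{D}}$, then use maximality of $\mathcal{A}$ together with the fact that $\mathcal{F}_{A,B}^{\mathcal{D}}$ is an algebra in $\mathcal{T}_{n,d}(\mathcal{D})$ to force equality---is exactly the intended one, and the invocation of Theorem~\ref{t1} via the kernel condition produced by the Proposition is appropriate.

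There is, however, a genuine error in your justification. You assert twice that any two block Toeplitz matrices with entries in $\mathcal{D}$ commute simply because their entries lie in a commutative algebra. That is false: already for $d=1$ (so $\mathcal{D}=\mathbb{C}$), two generic scalar Toeplitz matrices do not commute, and $\mathcal{T}_{n,1}(\mathcal{D})=\mathcal{T}_n$ is certainly not a commutative set. Commutativity of the entries does not propagate to commutativity of the block matrices. Consequently the sentence ``every subalgebra of $\mathcal{T}_{n,d}(\mathcal{D})$ is automatically commutative (its entries lie in the commutative algebra $\mathcal{D}$)'' is wrong as stated.

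The gap is easily repaired in one of two ways. First, observe that the proof of the Proposition never actually uses commutativity of $\mathcal{A}$, only closure under multiplication; so the Proposition applies to any algebra in $\mathcal{T}_{n,d}(\mathcal{D})$, commutative or not. Second (and more in the spirit of your write-up), commutativity of $\mathcal{A}$ does hold, but for a nontrivial reason: by Lemma~\ref{l6} each element of $\mathcal{A}$ becomes block-diagonal with scalar Toeplitz blocks, the projection onto each block is an algebra homomorphism whose image is a subalgebra of $\mathcal{T}_n$, and by Theorem~\ref{th:shalom1} every such subalgebra sits inside some commutative $\Pi_{\alpha}$. Either route closes the argument; just do not rely on the bare commutativity of $\mathcal{D}$. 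Note also that your detour through Theorem~\ref{th:max com} is unnecessary: once $\mathcal{F}_{A,B}^{\mathcal{D}}$ is known to be an algebra contained in $\mathcal{T}_{n,d}(\mathcal{D})$, maximality of $\mathcal{A}$ alone forces equality.
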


\section{Entries in the schur algebras}\label{se:schur}

We start with the following simple lemma.

\begin{lemma}\label{l7}
	An element $T\in\mathcal{O}_{\sigma,\tau}$ is invertible if and only if $\lambda\neq 0$.
\end{lemma}
\begin{proof}
	Let $T\in\mathcal{O}_{\sigma, \tau
	}$ be an element
	of the form 
	\[
	T=
	\begin{pmatrix}
	\lambda I_{\sigma}&  X\\
	0      & \lambda I_{\tau}
	\end{pmatrix}
	.\]
	Then $T$ is invertible if and only if \hbox{det}$(T)=\lambda^{d}\neq0$ if and only if $\lambda\neq 0$. 
\end{proof}

Suppose now that $\mathcal{A}$ is a maximal commutative subalgebra that has its entries in the Schur algebra $\mathcal{O}_{\sigma,\tau}$. If at least one element of $\mathcal{A}$ has an invertible off-diagonal entry, then we can apply Theorem 4.5 and obtain that $\mathcal{A}=\mathcal{F}_{A,B}^{\mathcal{O}_{\sigma,\tau}}$ for some $A,B\in \mathcal{O}_{\sigma,\tau}$. The next theorem shows that there is only one other maximal commutative subalgebra contained in $\mathcal{T}_{n,d}(\mathcal{O}_{\sigma,\tau})$.

\begin{theorem}
	The set $\mathcal{S}$ of  block Toeplitz matrices that have entries in $\mathcal{O}_{\sigma,\tau}$, and all off diagonal entries are noninvertible forms a maximal commutative algebra. It is  of  type $\mathcal{F}_{A,B}^{\mathcal{B}}$ for some $ A,B $ that do not satisfy condition~\eqref{eq1}.
\end{theorem}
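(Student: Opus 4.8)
The plan is to produce explicit $A,B\in\mathcal{O}_{\sigma,\tau}$ realizing $\mathcal{S}=\mathcal{F}_{A,B}^{\mathcal{O}_{\sigma,\tau}}$, to verify by hand that this family is an algebra (since condition~\eqref{eq1} is unavailable), and then to obtain maximality by re-running the proof of Theorem~\ref{th:max com}. Write $\mathcal{N}=\{\,T\in\mathcal{O}_{\sigma,\tau}:\lambda=0\,\}$ for the noninvertible elements of $\mathcal{O}_{\sigma,\tau}$; by Lemma~\ref{l7} these are exactly the matrices $\left(\begin{smallmatrix}0&X\\0&0\end{smallmatrix}\right)$, and they satisfy the crucial identity $\mathcal{N}\,\mathcal{N}=\{0\}$. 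Assuming $\sigma\tau\ge2$ (the excluded case $\sigma=\tau=1$ being the nonderogatory algebra treated elsewhere), I would choose linearly independent $P,Q\in\mathcal{M}_{\sigma\times\tau}(\mathbb{C})$ and set $A=\left(\begin{smallmatrix}0&P\\0&0\end{smallmatrix}\right)$, $B=\left(\begin{smallmatrix}0&Q\\0&0\end{smallmatrix}\right)$.

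For $T_j=\left(\begin{smallmatrix}\lambda_jI&Y_j\\0&\lambda_jI\end{smallmatrix}\right)$ one computes $AT_j=\left(\begin{smallmatrix}0&\lambda_jP\\0&0\end{smallmatrix}\right)$ and $BT_{j-n}=\left(\begin{smallmatrix}0&\lambda_{j-n}Q\\0&0\end{smallmatrix}\right)$, so the defining relation $AT_j=BT_{j-n}$ becomes $\lambda_jP=\lambda_{j-n}Q$; linear independence of $P,Q$ then forces $\lambda_j=\lambda_{j-n}=0$. As $j$ runs over $1,\dots,n-1$, the indices $j$ and $j-n$ exhaust all nonzero positions, so these relations are together equivalent to the vanishing of $\lambda_k$ for every $k\neq0$, i.e.\ to all off-diagonal blocks being noninvertible. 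Hence $\mathcal{F}_{A,B}^{\mathcal{O}_{\sigma,\tau}}=\mathcal{S}$. Moreover $\operatorname{Ker}A\cap\operatorname{Ker}B\supseteq\mathbb{C}^{\sigma}\times\{0\}\neq\{0\}$, so $(A,B)$ violates~\eqref{eq1}, which proves the last assertion and supplies the example announced in Remark~\ref{re:basic condition not necessary}.

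Because \eqref{eq1} fails, Theorem~\ref{t1} yields only that $\mathcal{S}$ is a commutative linear subspace, not that it is an algebra, so I would check closure under multiplication directly. For $\mathbf{T},\mathbf{U}\in\mathcal{S}$ the $(i,j)$ block of $\mathbf{TU}$ is $\sum_kT_{i-k}U_{k-j}$; any term with $i-k\neq0$ and $k-j\neq0$ has both factors in $\mathcal{N}$ and hence vanishes, so only $k=i$ and $k=j$ contribute. This gives $(\mathbf{TU})_{i,j}=T_0U_0$ for $i=j$ and $T_0U_{i-j}+T_{i-j}U_0$ for $i\neq j$; in particular the block depends only on $i-j$ (so $\mathbf{TU}$ is block Toeplitz, in accordance with Lemma~\ref{le:condition for product to be Toeplitz}), its diagonal block lies in $\mathcal{O}_{\sigma,\tau}$, and for $m\neq0$ the block $T_0U_m+T_mU_0$ lies in $\mathcal{N}$ and is noninvertible. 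Thus $\mathbf{TU}\in\mathcal{S}$, and $\mathcal{S}$ is a commutative algebra.

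For maximality I would reuse the forward implication of Theorem~\ref{th:max com} with $\mathcal{B}=\mathcal{O}_{\sigma,\tau}$, the point being that that argument never invokes~\eqref{eq1}: it requires only that the constant diagonal matrices $\mathbf{U}=\operatorname{diag}(U,\dots,U)$, $U\in\mathcal{O}_{\sigma,\tau}$, and the single matrix $\mathbf{J}$ belong to the algebra, and both lie in $\mathcal{S}$ (the off-diagonal blocks of $\mathbf{U}$ are $0\in\mathcal{N}$, those of $\mathbf{J}$ are $A,B\in\mathcal{N}$). Given a commutative algebra $\mathcal{F}\supseteq\mathcal{S}$ in $\mathcal{T}_{n,d}$ and $\mathbf{T}\in\mathcal{F}$, commuting $\mathbf{T}$ with all $\mathbf{U}$ forces each block $T_p$ into $\mathcal{O}_{\sigma,\tau}'=\mathcal{O}_{\sigma,\tau}$, while $\mathbf{JT}\in\mathcal{F}\subset\mathcal{T}_{n,d}$ and equality of blocks along a diagonal give $AT_i=BT_{i-n}$; hence $\mathbf{T}\in\mathcal{F}_{A,B}^{\mathcal{O}_{\sigma,\tau}}=\mathcal{S}$ and $\mathcal{F}=\mathcal{S}$. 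The main obstacle throughout is exactly that \eqref{eq1} is false, so neither Theorem~\ref{t1} nor Theorem~\ref{th:max com} applies as a black box; the single fact that rescues both the algebra property and the maximality argument is the nilpotent identity $\mathcal{N}\,\mathcal{N}=\{0\}$, which makes the products of off-diagonal blocks vanish and thereby replaces the role of Lemma~\ref{le:kernel A and B}.
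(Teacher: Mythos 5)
Your proof is correct, and its core — choosing $A=\left(\begin{smallmatrix}0&P\\0&0\end{smallmatrix}\right)$, $B=\left(\begin{smallmatrix}0&Q\\0&0\end{smallmatrix}\right)$ with $P,Q$ linearly independent, and computing that $AT_j=BT_{j-n}$ reduces to $\lambda_jP=\lambda_{j-n}Q$, hence to $\lambda_j=\lambda_{j-n}=0$, so that $\mathcal{F}_{A,B}^{\mathcal{O}_{\sigma,\tau}}=\mathcal{S}$ and \eqref{eq1} fails — is exactly the paper's construction. You diverge in the two structural verifications, in each case to your advantage. For closure under multiplication, the paper applies Lemma~\ref{le:condition for product to be Toeplitz} together with the vanishing of products of strictly upper triangular blocks (your identity $\mathcal{N}\mathcal{N}=\{0\}$) to get Toeplitzness of $\mathbf{TU}$, but it never explicitly checks that the off-diagonal blocks of the product are again noninvertible; your formula $(\mathbf{TU})_{i,j}=T_0U_{i-j}+T_{i-j}U_0$ for $i\neq j$ settles both points at once. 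For maximality, the paper tests a commuting $\mathbf{U}$ against elements of $\mathcal{S}$ having a single nonzero block $Y$, but it writes $U_k=\left(\begin{smallmatrix}\lambda_kI&X_k\\0&\lambda_kI\end{smallmatrix}\right)$ from the outset, i.e.\ it tacitly assumes the commuting matrix already has entries in $\mathcal{O}_{\sigma,\tau}$; your route — commuting with the constant diagonal matrices to force $T_p\in\mathcal{O}_{\sigma,\tau}'=\mathcal{O}_{\sigma,\tau}$ (this is where $|\sigma-\tau|\le1$ enters), then using $\mathbf{J}$ to extract $AT_i=BT_{i-n}$, having observed that the forward half of Theorem~\ref{th:max com} never invokes~\eqref{eq1} — supplies that missing step, and it obtains maximality as a consequence of the identification $\mathcal{S}=\mathcal{F}_{A,B}^{\mathcal{O}_{\sigma,\tau}}$ rather than before it. Finally, your explicit hypothesis $\sigma\tau\ge2$ is a genuine refinement rather than pedantry: the paper silently needs two linearly independent elements of $\mathcal{M}_{\sigma\times\tau}(\mathbb{C})$, and for $\sigma=\tau=1$ the second assertion of the theorem is in fact false, since then $\mathcal{O}_{1,1}=\mathcal{P}(M)$ with $M^2=0$ and Theorem~\ref{th:counterexample} shows that $\mathcal{S}$ is not of type $\mathcal{F}_{A,B}$.
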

\begin{proof}
	First we will show that $\mathcal{S}$ is an algebra. Clearly $\mathcal{S}$ is a linear subspace. we have to show only that it is closed under block matrix multiplication.  For this let $\mathbf{T}=(T_{p-q})_{p,q=0}^{n-1}$ and $\mathbf{U}=(U_{p-q})_{p,q=0}^{n-1}$ be any two arbitrary elements of $\mathcal{S}$ then $\mathbf{TU}\in\mathcal{S}$ if and only if 	
	\[
	T_{p}U_{q-n}=T_{p-n}U_{q}\quad (p,q=1,2,\cdots n-1).
	\]
	Since each $T_{i}$ and $U_{i}$ is noninvertible, lemma \ref{l7} implies that    for every $i\not=0$ $T_{i}$ and $U_{i}$ are strictly upper triangular $2\times 2$ block matrices. Therefore  
	\begin{equation}\label{eq16}
	T_{p}U_{q-n}=T_{p-n}U_{q}=0\quad (p,q=1,2,\cdots n-1),
	\end{equation}  
	whence it follows that $\mathbf{TU}\in\mathcal{S}$. 
	
	We will show that $\mathcal{S}$ is a maximal commutative subalgebra contained in $\mathcal{T}_{n,d}(\mathcal{O}_{\sigma,\tau})$. Suppose  $\mathbf{U}=(U_{p-q})_{p,q=0}^{n-1}$ commutes with $\mathcal{S}$ and denote
	\[U_{k}=
	\begin{pmatrix}
	\lambda_kI_{\sigma}&X_{k}\\
	0   & \lambda_kI_{\tau}
	\end{pmatrix},\quad \lambda_k\in\mathbb{C}\quad, X_{k}\in\mathcal{M}_{\sigma\times\tau}(\mathbb{C}),
	\]
	Suppose, for instance, that $k\ge1$.  
	Take then  $\mathbf{T}=(T_{p-q})_{p,q=0}^{n-1}\in\mathcal{S}$ defined by
	\[T_{p}=
	\begin{pmatrix}
	0 & 0\\
	0   & 0
	\end{pmatrix},\quad
	T_{p-n}=
	\begin{pmatrix}
	0 & Y\\
	0   & 0
	\end{pmatrix},\quad p=1,2,\cdots n-1,
	\] 
	with $ Y$ some nonzero matrix in  $\mathcal{O}_{\sigma,\tau}$.
	Then
	\[
	T_{p}U_{k-n}=0,\quad  T_{p-n}U_{k}=
\begin{pmatrix}
0&\lambda_k\\ 0 & 0
\end{pmatrix},
\quad p=1,2,\cdots n-1.
	\]
	From~\eqref{eq:basic product condition} it follows then that $\lambda_k=0$. A similar argument works for $k<0$.
	Therefore $\mathbf{U}\in\mathcal{S}$, whence $\mathcal{S}$ is maximal commutative. 
	
	To obtain $\mathcal{S}=\mathcal{F}_{A,B}^{\mathcal{O}_{\sigma,\tau}}$, take two linearly independent matrices $ X,Y\in \mathcal{O}_{\sigma,\tau} $, and define
	\[
	A=\begin{pmatrix}
	0&X\\0&0
	\end{pmatrix},\quad B=\begin{pmatrix}
	0&Y\\0&0
	\end{pmatrix}.
	\]
	If $\mathbf{T}=(T_{p-q})_{p,q=0}^{n-1}\in\mathcal{S}$, then 
	$ AT_j=BT_{j-n}=0 $ for all $ j=1, \dots, n $. So $\mathcal{S}\subset \mathcal{F}_{A,B}^{\mathcal{O}_{\sigma,\tau}}  $.
	
	To prove the reverse, take $\mathbf{T}=(T_{p-q})_{p,q=0}^{n-1}\in\mathcal{F}_{A,B}^{\mathcal{O}_{\sigma,\tau}}$. If 
	\[
	T_i=\begin{pmatrix}
	\lambda_i&X_i\\0&\lambda_i
	\end{pmatrix},
	\]
	then 
	\[
	AT_i=\begin{pmatrix}
	0&\lambda_i X\\ 0&0
	\end{pmatrix}, \quad
		BT_{i-n}=\begin{pmatrix}
	0&\lambda_{i-n} Y\\ 0&0
	\end{pmatrix}.
	\]
	Since $ X,Y $ are linearly independent, condition~\eqref{eq:definition of FAB} implies $ \lambda_i=\lambda_{i-n}=0 $ for $ i=1, \dots, n-1 $.
	Therefore $ \mathbf{T} $ is not invertible, so it belongs to $ \mathcal{O}_{\sigma, \tau} $.
\end{proof}
	 
	 It is interesting to note that in this case $\mathcal{F}_{A,B}^{\mathcal{B}}$ is an algebra, although $ A,B $  do not satisfy condition~\eqref{eq1}. Note that we cannot write $\mathcal{S}=\mathcal{F}_{A,B}^{\mathcal{B}}$ with some $ A,B $ satisfying condition~\eqref{eq1}. Indeed, for a noninvertible matrix $ A\in \mathcal{O}_{\sigma, \tau} $ we have $\mathbb{C}^\sigma\oplus 0\in \ker A $. Therefore, to satisfy condition~\eqref{eq1}, at least one of $ A $ and $ B $ has to be invertible. 
	 Suppose then that
	 	\[
	 A=\begin{pmatrix}
	 \lambda&X\\0&\lambda
	 \end{pmatrix},\quad B=\begin{pmatrix}
	 \mu&Y\\0&\mu
	 \end{pmatrix}.
	 \]
	 For a noninvertible $\mathbf{T}=(T_{p-q})_{p,q=0}^{n-1}$, with
	 \[
	 T_i=\begin{pmatrix}
	0&X_i\\0&0
	 \end{pmatrix},
	 \]
	  condition~\eqref{eq:definition of FAB} becomes
	  \[
	  \begin{pmatrix}
	  0&\lambda X_i\\0&0
	  \end{pmatrix}=
	   \begin{pmatrix}
	  0&\mu X_{i-n}\\0&0
	  \end{pmatrix},
	  \]
	  or $\lambda X_i=\mu X_{i-n}  $
	 If at least one of $ \lambda,\mu $ is nonzero, this equality is not satisfied by every $ \mathbf{T}\in\mathcal{S} $.
	 
	 \begin{corollary}\label{co:schur}
	 	If $\mathcal{A}$ is a maximal subalgebra of $\mathcal{T}_{n,d}(\mathcal{O}_{\sigma, \tau} )$, then $\mathcal{A}=\mathcal{F}_{A,B}^{\mathcal{O}_{\sigma,\tau}}$ for some $A, B\in \mathcal{O}_{\sigma, \tau} $. 	
	 \end{corollary}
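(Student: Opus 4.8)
The plan is to split into two cases according to whether $\mathcal{A}$ contains an element one of whose off-diagonal blocks is invertible. By Lemma~\ref{l7}, a block $T_j\in\mathcal{O}_{\sigma,\tau}$ is invertible precisely when its scalar parameter $\lambda$ is nonzero, so this dichotomy amounts to asking whether some off-diagonal block of some element of $\mathcal{A}$ has $\lambda\neq0$. The two cases are plainly exhaustive, and each is handled by a result already established.

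In the first case there is an element $\mathbf{T}\in\mathcal{A}$ and an index $p\neq0$ with $T_p$ invertible. Since $\mathcal{O}_{\sigma,\tau}$ is a maximal commutative subalgebra of $\mathcal{M}_{d\times d}(\mathbb{C})$ and $\mathcal{A}$ is a maximal subalgebra of $\mathcal{T}_{n,d}(\mathcal{O}_{\sigma,\tau})$, Corollary~\ref{co:invertible entry} applies directly with $\mathcal{B}=\mathcal{O}_{\sigma,\tau}$ and yields $\mathcal{A}=\mathcal{F}_{A,B}^{\mathcal{O}_{\sigma,\tau}}$ for suitable $A,B\in\mathcal{O}_{\sigma,\tau}$, which is the desired conclusion.

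In the remaining case every off-diagonal block of every element of $\mathcal{A}$ is noninvertible. By Lemma~\ref{l7} each such block then has $\lambda=0$, i.e.\ is strictly upper triangular with respect to the decomposition $\mathbb{C}^{d}=\mathbb{C}^{\sigma}\oplus\mathbb{C}^{\tau}$; this is exactly the defining property of the algebra $\mathcal{S}$ from the preceding theorem, so $\mathcal{A}\subseteq\mathcal{S}$. That theorem shows $\mathcal{S}$ is itself an algebra contained in $\mathcal{T}_{n,d}(\mathcal{O}_{\sigma,\tau})$ and exhibits $A,B$ with $\mathcal{S}=\mathcal{F}_{A,B}^{\mathcal{O}_{\sigma,\tau}}$. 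Since $\mathcal{A}$ is a maximal subalgebra contained in the subalgebra $\mathcal{S}$, maximality forces $\mathcal{A}=\mathcal{S}=\mathcal{F}_{A,B}^{\mathcal{O}_{\sigma,\tau}}$, finishing this case.

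The substance of the statement lies entirely in the two results it invokes, namely Corollary~\ref{co:invertible entry} and the preceding structure theorem for $\mathcal{S}$; given these, the corollary reduces to the case distinction above. The only point that warrants care is that no commutativity hypothesis on $\mathcal{A}$ has been assumed: in the first case Corollary~\ref{co:invertible entry} is already formulated for maximal subalgebras, and in the second case the inclusion $\mathcal{A}\subseteq\mathcal{S}$ into a commutative algebra makes $\mathcal{A}$ commutative automatically, so maximality as a subalgebra and as a commutative subalgebra coincide here. I therefore expect no genuine obstacle beyond this bookkeeping.
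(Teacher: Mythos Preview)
Your argument is correct and matches the paper's intended reasoning: the paper sets up exactly this dichotomy in the paragraph preceding the theorem on $\mathcal{S}$, handling the invertible-entry case via Corollary~\ref{co:invertible entry} and the remaining case via the identification $\mathcal{S}=\mathcal{F}_{A,B}^{\mathcal{O}_{\sigma,\tau}}$, then states the corollary without further proof. Your closing remark about commutativity not being assumed is a useful clarification that the paper leaves implicit.
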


	 \section{Entries in a singly generated algebra}
	Suppose $M$ is a nonderogatory matrix, with minimal polynomial $p_M$ of degree $\delta$, and $\mathcal{B}$ is the algebra generated by $M$, that is 
	\[
	\mathcal{B}=\mathcal{P}(M):=\{p(M): p \text{ polynomial }\}=
	\{p(M): p \text{ polynomial }, \degree p<\delta\}.
	\]
	The next lemma is well known.
	
	\begin{lemma}\label{le:inv nonderogatory}
		An element $p(M)\in \mathcal{P}(M)$ is invertible if and only if $p$ and $ p_M $ are relatively prime.
	\end{lemma}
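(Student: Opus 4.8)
The plan is to prove the two implications separately, using the fact that $\mathbb{C}[x]$ is a principal ideal domain together with the defining relation $p_M(M)=0$.

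First I would treat the case where $p$ and $p_M$ are relatively prime. Since their greatest common divisor is $1$, B\'ezout's identity in $\mathbb{C}[x]$ supplies polynomials $a,b$ with $a(x)p(x)+b(x)p_M(x)=1$. Applying the evaluation homomorphism $q\mapsto q(M)$ and using $p_M(M)=0$ yields $a(M)p(M)=I$. Hence $p(M)$ is invertible, and moreover its inverse $a(M)$ again lies in $\mathcal{P}(M)$, which is consistent with the inverse-closedness guaranteed by Lemma~\ref{le:inverse closed}.

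For the converse I would argue by contraposition: assume $p$ and $p_M$ are not relatively prime, so that $g=\gcd(p,p_M)$ has positive degree. The cleanest route is to avoid eigenvalues entirely: write $p=g\,p_1$ and $p_M=g\,q$, and note $\degree q<\delta$. Then $p(M)\,q(M)=p_1(M)\,p_M(M)=0$, while $q(M)\neq 0$ since $q$ has degree strictly less than that of the minimal polynomial. Thus $p(M)$ is a (left) zero divisor and cannot be invertible. Alternatively, one may observe that a common root $\lambda$ of $p$ and $p_M$ is an eigenvalue of $M$ — here the nonderogatory hypothesis makes $p_M$ the characteristic polynomial, so its roots are exactly the eigenvalues — and for an eigenvector $v\neq0$ one has $p(M)v=p(\lambda)v=0$, so $\ker p(M)\neq\{0\}$.

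The argument is essentially routine once the two classical ingredients (B\'ezout's identity and minimality of $p_M$) are in place; there is no real obstacle. The only point worth a moment's care is the identification of the roots of $p_M$ with the eigenvalues of $M$ in the eigenvector variant, which is precisely what the nonderogatory (equivalently, minimal-equals-characteristic) assumption secures — though, as noted, the zero-divisor computation sidesteps this altogether.
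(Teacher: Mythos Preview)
Your argument is correct; both implications are handled cleanly, and the zero-divisor computation for the converse is the right way to do it. The paper, however, does not actually prove this lemma: it simply declares it ``well known'' and moves on. So there is nothing to compare against.

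One small remark: in your eigenvector variant you invoke the nonderogatory hypothesis to identify the roots of $p_M$ with the eigenvalues of $M$, but this identification holds for \emph{any} matrix over $\mathbb{C}$ --- the minimal polynomial and the characteristic polynomial always share the same set of roots (only the multiplicities may differ). The nonderogatory assumption is irrelevant to the lemma itself; it enters the paper only to guarantee that $\mathcal{P}(M)$ is a \emph{maximal} commutative subalgebra. Your zero-divisor route already avoids this issue entirely, so the observation is cosmetic.
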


As a consequence of Theorem~\ref{th:invertible entry}, we have then the next result.

\begin{corollary}\label{co:inv entry nonderogatory}
Suppose $\mathcal{A}\subset \mathcal{T}$ is an algebra with entries in $\mathcal{P}(M)$. Suppose $\mathcal{A}$ contains an element  $\mathbf{T}$ such that for some $j\neq0$,  $T_{j}=p(M)$ with $p$ and $ p_M $  relatively prime. Then $\mathcal{A}\subset\mathcal{F}_{A,B}^{\mathcal{P}(M)}$ for some $A, B\in \mathcal{P}(M)$. 
\end{corollary}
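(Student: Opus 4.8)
The plan is to derive this statement directly from Theorem~\ref{th:invertible entry}, using Lemma~\ref{le:inv nonderogatory} only to convert the coprimality hypothesis into the invertibility hypothesis that the theorem requires.

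First I would verify that all the standing hypotheses of Theorem~\ref{th:invertible entry} are in force for the choice $\mathcal{B}=\mathcal{P}(M)$. Since $M$ is nonderogatory, the algebra $\mathcal{P}(M)$ is a maximal commutative subalgebra of $\mathcal{M}_{d\times d}(\mathbb{C})$, as recorded in item~(4) of the list in Section~\ref{se:block toeplitz}; and by hypothesis $\mathcal{A}$ is an algebra contained in $\mathcal{T}_{n,d}(\mathcal{P}(M))$. So the ambient setup of the theorem is reproduced verbatim.

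The key step is to exhibit, inside $\mathcal{A}$, an element with an invertible off-diagonal entry. By assumption $\mathbf{T}\in\mathcal{A}$ has an entry $T_j=p(M)$ with $j\neq0$, where $p$ and the minimal polynomial $p_M$ are relatively prime. Lemma~\ref{le:inv nonderogatory} then says precisely that $p(M)$ is invertible in $\mathcal{P}(M)$. Thus $\mathbf{T}$ is an element of $\mathcal{A}$ whose entry $T_r$ is invertible for the index $r=j\neq0$; note that the sign of $j$ is immaterial, since Theorem~\ref{th:invertible entry} treats the cases $r>0$ and $r<0$ alike.

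Finally I would simply invoke Theorem~\ref{th:invertible entry}, which under these exact conditions gives $\mathcal{A}\subset\mathcal{F}_{A,B}^{\mathcal{P}(M)}$ for some $A,B\in\mathcal{P}(M)$, completing the argument. I do not anticipate any genuine obstacle: the corollary is a pure specialization of Theorem~\ref{th:invertible entry}, and the only content beyond that theorem is the elementary translation, via Lemma~\ref{le:inv nonderogatory}, between ``$p$ relatively prime to $p_M$'' and ``$p(M)$ invertible.''
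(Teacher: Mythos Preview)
Your proposal is correct and matches the paper's approach exactly: the paper presents this corollary without a separate proof, simply stating it as a consequence of Theorem~\ref{th:invertible entry}, and your argument spells out precisely that reduction via Lemma~\ref{le:inv nonderogatory}.
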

	
	However, not all maximal subalgebras of $ \mathcal{T}_{n,d} $ are of type $ \mathcal{F}_{A,B}^{\mathcal{P}(M)} $. To see this, we will discuss in the rest of this section 
 the particular case of a nilpotent matrix of order 2; that, we assume that $M\not=0$ and $M^2=0$. 
	
	\begin{theorem}\label{th:counterexample}
			Suppose $M$ is a nonderogatory matrix such that $M^2=0$. The set $\mathcal{A}$ of all block Toeplitz matrices that have entries in $\mathcal{P}(M)$, and all off diagonal entries are noninvertible, form a maximal commutative algebra that is not of the type $\mathcal{F}_{A,B}^{\mathcal{P}(M)}$.
	\end{theorem}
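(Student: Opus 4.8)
The plan is to reduce the statement to a completely explicit $2\times 2$ situation and then to rule out \emph{every} possible $\mathcal{F}_{A,B}^{\mathcal{P}(M)}$ by a short computation. Since $M$ is nonderogatory with $M^2=0$ and $M\neq0$, its minimal polynomial is $x^2$, which (by nonderogatoriness) is also its characteristic polynomial; hence $d=2$ and, in a Jordan basis, $M=\begin{pmatrix}0&1\\0&0\end{pmatrix}$, so that $\mathcal{P}(M)=\{aI+bM:a,b\in\mathbb{C}\}$ coincides with the Schur algebra $\mathcal{O}_{1,1}$. Under this identification $\mathcal{A}$ is precisely the algebra $\mathcal{S}$ of Section~\ref{se:schur} in the case $\sigma=\tau=1$, and therefore it is a maximal commutative algebra by the theorem proved there (alternatively one repeats the short commutation argument given in that proof). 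It remains only to show that $\mathcal{A}$ cannot be written as $\mathcal{F}_{A,B}^{\mathcal{P}(M)}$.

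I would argue by contradiction, assuming $\mathcal{A}=\mathcal{F}_{A,B}^{\mathcal{P}(M)}$ and writing $A=a_0I+a_1M$, $B=b_0I+b_1M$. By Lemma~\ref{l7}, an element of $\mathcal{P}(M)$ is noninvertible exactly when its coefficient of $I$ vanishes, so membership in $\mathcal{A}$ says precisely that every off-diagonal block $T_j$ ($j\neq0$) has the form $t_jM$, while the diagonal block $T_0$ and all the scalars $t_j$ are free. The crucial structural observation is that the defining conditions~\eqref{eq:definition of FAB} of $\mathcal{F}_{A,B}^{\mathcal{P}(M)}$ couple the off-diagonal blocks only through the disjoint pairs $(T_j,T_{j-n})$ with $j=1,\dots,n-1$, and these pairs exhaust all nonzero indices in $\{-(n-1),\dots,n-1\}$. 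Consequently the identity $\mathcal{A}=\mathcal{F}_{A,B}^{\mathcal{P}(M)}$ is equivalent to the assertion that, for each such pair, the single equation $AT_j=BT_{j-n}$ in $\mathcal{P}(M)$ is equivalent to requiring that the $I$-coefficients of both $T_j$ and $T_{j-n}$ vanish.

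The heart of the argument, and the point at which the hypothesis $d=2$ is decisive, is that this last requirement is impossible. Writing $T_j=s_jI+t_jM$ and $T_{j-n}=s_{j-n}I+t_{j-n}M$ and expanding $AT_j=BT_{j-n}$ with the help of $M^2=0$, one gets the two scalar equations $a_0s_j=b_0s_{j-n}$ and $a_0t_j+a_1s_j=b_0t_{j-n}+b_1s_{j-n}$. Imposing $\mathcal{A}\subset\mathcal{F}_{A,B}^{\mathcal{P}(M)}$ --- i.e.\ that these hold whenever $s_j=s_{j-n}=0$ with $t_j,t_{j-n}$ arbitrary --- forces $a_0=b_0=0$, so $A=a_1M$ and $B=b_1M$. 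But then $AT_j=BT_{j-n}$ reduces to the single equation $a_1s_j=b_1s_{j-n}$, whose solution set in the $(s_j,s_{j-n})$-plane has codimension at most one and hence cannot equal $\{s_j=s_{j-n}=0\}$. Explicitly, for every choice of $(a_1,b_1)$ one can pick $(s_j,s_{j-n})$ satisfying $a_1s_j=b_1s_{j-n}$ with at least one coordinate nonzero, producing an element of $\mathcal{F}_{A,B}^{\mathcal{P}(M)}$ with an invertible off-diagonal block, hence not in $\mathcal{A}$. This contradiction finishes the proof.

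I expect the only part requiring care to be the bookkeeping behind the structural observation of the second paragraph: one must check that the constraints $AT_j=BT_{j-n}$, $j=1,\dots,n-1$, really do partition the off-diagonal indices into independent pairs with $T_0$ left free, so that the global equality of the two algebras reduces to the pairwise equivalence analyzed above; the remaining steps are a routine linear computation exploiting $M^2=0$. It is illuminating to note that the obstruction here is exactly the breakdown of the construction used in Section~\ref{se:schur}: there one chooses $A,B$ strictly upper triangular with \emph{linearly independent} off-diagonal parts $X,Y$, which is impossible for $\sigma=\tau=1$ because those parts then lie in the one-dimensional space $\mathcal{M}_{1\times1}(\mathbb{C})$.
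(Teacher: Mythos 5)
Your proposal is correct, and for the heart of the statement --- non-representability --- it is in substance the paper's own argument written in coordinates: the paper likewise first forces $A$ and $B$ to be scalar multiples of $M$ (using the test matrices whose off-diagonal block pairs are $(0,M)$ and $(M,0)$, which is exactly your step forcing $a_0=b_0=0$), and then exhibits $\mathbf{T}$ with $T_j=bI_d$, $T_{j-n}=aI_d$ and all other off-diagonal blocks zero, which lies in $\mathcal{F}_{A,B}^{\mathcal{P}(M)}$ but not in $\mathcal{A}$ --- precisely your solution $(s_j,s_{j-n})=(b_1,a_1)$ of $a_1s_j=b_1s_{j-n}$; both treatments also dispose of $A=B=0$ separately. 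Where you genuinely differ is the maximality claim: the paper proves it directly by a commutation argument with those same test matrices, while you observe that nonderogatority together with $M\neq0$, $M^2=0$ forces $d=2$ and $\mathcal{P}(M)=\mathcal{O}_{1,1}$, and then import maximality from Section~\ref{se:schur}. That identification is a real gain, since it explains why the counterexample exists at all: the construction of $A,B$ in Section~\ref{se:schur} needs two linearly independent matrices in $\mathcal{M}_{\sigma\times\tau}(\mathbb{C})$, which is impossible exactly when $\sigma\tau=1$. The citation does require one caution, which you only partly acknowledge: the theorem of Section~\ref{se:schur}, as literally stated, asserts that $\mathcal{S}$ \emph{is} of type $\mathcal{F}_{A,B}$, and Theorem~\ref{th:counterexample} itself refutes that when $\sigma=\tau=1$; so that theorem must be read as implicitly restricted to $\sigma\tau\ge2$, and you may invoke only its maximality half, whose proof does go through verbatim for $\sigma=\tau=1$ (your parenthetical offer to repeat the commutation argument covers this). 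Finally, your explicit reduction of the equality $\mathcal{A}=\mathcal{F}_{A,B}^{\mathcal{P}(M)}$ to an equality of solution sets over the disjoint pairs $(T_j,T_{j-n})$, settled by a codimension count, is sound and makes rigorous a bookkeeping point the paper leaves implicit, showing cleanly that every choice of $(A,B)$ is excluded.
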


\begin{proof}
	It is easy to see that $\mathcal{A}$ is an algebra: if $\mathbf{T}, \mathbf{S}\in \mathcal{A}$, then any nondiagonal entry of $\mathbf{T} \mathbf{S}$ is a sum of products, and each of the products contains at least one term that has $M$ as a factor, and is thus noninvertible.
	
	To show that $\mathcal{A}$ is maximal commutative, suppose that $\mathbf{T} \mathbf{U}=\mathbf{U} \mathbf{T} $ for all $\mathbf{T}\in\mathcal{A}$. 	
	Take $\mathbf{T}$ with 
	\begin{equation}\label{eq:choice of T1}
		T_p=0, \quad T_{p-n}=M \text{ for } p=1,\dots, n.
	\end{equation}
	Formula~\eqref{eq:basic product condition} tells us that 
	$
	0=M U_q
$
	for all $q=1,\dots, n$, whence it follows that $U_q$ cannot be invertible. Similarly, by taking 
	$\mathbf{T}$ with 
		\begin{equation}\label{eq:choice of T2}
	T_p=M, \quad T_{p-n}=0 \text{ for } p=1,\dots, n,
	\end{equation}
	we obtain $U_{q-n}$ is noninvertible for all $q=1,\dots, n$. Therefore 
	$\mathbf{U}$ has all its off-diagonal entries noninvertible, and therefore it belongs to $\mathcal{A}$.
	
	Suppose now that $\mathcal{A}=\mathcal{F}_{A,B}^{\mathcal{P}(M)}$ for some $A,B\in\mathcal{A}$. In particular, using again $\mathbf{T}$ from~\eqref{eq:choice of T1}  it follows from~\eqref{eq:definition of FAB} that $BM=0$. Similarly, using~\eqref{eq:choice of T2} we obtain $AM=0$. So $A$ and $B$ are noninvertible elements in $\mathcal{P}(M)$, say $A=aM$, $B=bM$. 

If $a=b=0$, then $\mathcal{F}_{A,B}^{\mathcal{P}(M)}=\mathcal{T}_{n,d}(\mathcal{P}(M))\not=\mathcal{A}$. Otherwise, suppose at least one of $a,b$ is nonzero. 
	If we define  $\mathbf{T}$ by 
	 \[
	 T_j=b I_d, \quad T_{n-j}=a I_d,
	 \]
	and $T_i=0$ for other values of $T$, Then $\mathbf{T}$ satisfies~\eqref{eq:definition of FAB}, and therefore $\mathbf{T}\in \mathcal{F}_{A,B}^{\mathcal{P}(M)}$. But obviously $\mathbf{T}\notin\mathcal{A}$, so $\mathcal{A}\not= \mathcal{F}_{A,B}^{\mathcal{P}(M)}$.	
\end{proof}

\section*{Acknowledgements}

The author is highly grateful to the Abdus Salam School of Mathematical Sciences GC University, Lahore (ASSMS) for providing financial assistance during the preparation of this paper.

\end{document}